\documentclass[12pt]{iopart}
\usepackage{amssymb,amsthm}

\usepackage{setstack}
\usepackage{graphicx}
\usepackage{float}
\usepackage{booktabs}
\usepackage{subcaption}

\newcommand{\var}{\mathrm{Var}}

\newtheorem{theorem}{Theorem}

\begin{document}

\title{Sampling Distributions as Regularization in Learned Inverse Problems}

\author{Sandra R. Babyale$^1$, and Jodi Mead$^2$}

\address{$^1$ Computing PhD, Boise State University, Boise, ID, USA}
\address{$^2$ Department of Mathematics, Boise State University, Boise, ID, USA}

\ead{sandrababyale@u.boisestate.edu}

\begin{abstract}
Neural networks have emerged as effective tools in  
solving ill-posed inverse problems. In many scientific applications, however, observational training data are not available in sufficient quantity to train a neural network model.  Subsequently,  learned inverse operators are trained on synthetic data generated from the forward model.  This requires specifying unknown parameters in the forward model, i.e.\ the target for the neural network model, and solving the forward model for synthetic, observational data i.e.\ the features for the neural network model.  Typically the unknown parameters are randomly sampled from a prescribed probability distribution.  Here we explain that the sampling strategy is not a neutral preprocessing step, rather the choice of prior sampling distribution 
specifies an implicit regularization operator. 
This result is based on the fact that the learned inverse operator minimizes empirical risk, and the well-known result that the conditional
expectation minimizes mean-square error.  We present theoretical results for the implicit regularization operator in both the infinite and finite training data settings, and with Physics Informed Neural Networks (PINN). 
These results are demonstrated numerically on 
three inverse problems of increasing complexity: a 1D linear Fredholm integral equation, a 1D nonlinear subsurface interface inversion, and a 2D nonlinear cross-well seismic traveltime tomography problem. Across all three problems, three distinct sources of regularization are identified in the learned operator, namely prior sampling, architectural, and physics-informed regularization.  It is observed that a mismatched sampling distribution degrades reconstruction quality in ways that neither a more expressive architecture nor an augmented physics residual can fully correct. Thus the
distributional shape of the sampling measure is found to be the dominant determinant of reconstruction quality.  
These results demonstrate that not only should the sampling distribution be chosen with the same care as a regularization functional, but also give a straightforward approach to implement more complicated regularization operators (e.g.\ $L_1$) using neural networks.  \end{abstract}

\section{Introduction}
\label{intro}
Inverse problems arise in many areas of science and engineering, where one seeks to recover unknown model parameters from indirect and often noisy observations. A common formulation is
$$d=G(m)+\varepsilon . $$
where $G$ is a forward operator mapping parameters $m$ to data $d$, and $\varepsilon$ represents measurement noise. Such problems are frequently ill-posed in the sense that solutions may not exist, may not be unique, or may be unstable with respect to perturbations in the data \cite{Stuart2010}.

Classical approaches address ill-posedness through regularization \cite{Stuart2010,DashtiStuart2017}. In variational formulations, one seeks a solution by minimizing a combination of data misfit and a regularization functional. In the Bayesian framework, prior information is incorporated through a probability measure on the parameter space, and the solution is characterized in terms of the posterior distribution. It is well known that these perspectives are closely related: for suitable choices of likelihood and prior, maximum a posteriori (MAP) estimation coincides with variational regularization \cite{Stuart2010}.

In recent years, there has been increasing interest in the use of machine learning for solving inverse problems \cite{Arridge2019}. In many applications, neural networks are trained to approximate an inverse operator by learning from pairs of parameters and corresponding data \cite{Adler2018}. These pairs are typically generated synthetically by sampling parameters from a prescribed distribution and evaluating the forward model. While such approaches have demonstrated empirical success, their connection to classical regularization theory is not always made explicit.

The purpose of this work is to establish a direct connection between the choice of training data in learned inversion and the selection of a regularization strategy. We show that the sampling distribution used to generate training data plays the same role as a prior in Bayesian inversion, and therefore defines an implicit regularization functional. In particular, when a neural network is trained to minimize mean squared error over data generated from a measure $\mu$, the resulting operator converges, in the infinite data limit, to the conditional expectation $\mathbb E_\mu \left[m\mid d\right]$ \cite{Wainwright2019}. This is precisely the Bayes estimator associated with the prior $\mu$ . Consequently, the design of training data is mathematically equivalent to the choice of prior or regularization functional.

This perspective provides a unified interpretation of several approaches to inverse problems. Classical Tikhonov, sparsity-promoting, and total variation regularization correspond to Gaussian, Laplace, and structured priors, respectively. In the learned setting, these same priors arise through the choice of sampling measure used to generate training data. Thus, rather than viewing learning-based inversion as a departure from classical methods, it may be understood as an alternative implementation of regularization in which the inverse operator is learned in advance.  

From a computational perspective, this viewpoint is particularly relevant for nonlinear inverse problems, where classical approaches can be challenging to apply in practice. Variational methods based on Newton or Gauss-Newton iterations require repeated evaluation of the forward operator and its Jacobian, and may suffer from instability in the presence of ill-posedness or nonsmooth regularization. Bayesian approaches based on sampling, such as Markov chain Monte Carlo, provide a principled framework for uncertainty quantification, but are often computationally prohibitive in high-dimensional settings \cite{Stuart2010,DashtiStuart2017}. 

In contrast, the learning-based approach shifts the computational burden to an offline stage in which training data are generated by solving the forward problem. Once trained, the inverse operator can be evaluated at negligible cost, without requiring iterative optimization or repeated forward solves. While the total computational cost depends on the complexity of the forward model and the amount of training data required, this amortization can be advantageous when multiple inverse problems must be solved.

An additional practical advantage arises in the treatment of nonsmooth or sparsity-promoting regularization. In classical formulations, incorporating $L_1$ or related penalties into nonlinear inverse problems leads to difficult optimization problems, for which robust and scalable algorithms are limited. In the present framework, such regularization can be implemented directly through the choice of sampling measure $\mu$, for example by drawing training parameters from distributions that promote sparsity or structured features. This suggests a flexible mechanism for incorporating prior information, even in settings where traditional regularization methods are difficult to apply.

Finally, this perspective highlights the importance of the sampling distribution itself. In particular, choosing $\mu$ to be uniform over a large parameter space does not impose meaningful regularization, and may lead to unstable or poorly conditioned learned operators in ill-posed settings. Careful design of $\mu$ is therefore essential, both for stability and for incorporating domain-specific prior knowledge.

We further analyze the role of finite training data and neural network approximation in this framework. A bias–variance decomposition is derived, identifying approximation error due to the choice of sampling measure and network architecture, estimation error due to finite data, and irreducible error arising from the non-uniqueness of the inverse problem. This decomposition clarifies the conditions under which a learned inverse operator provides a stable and accurate approximation.

Finally, we consider extensions in which additional physical constraints are incorporated into the learning objective. In particular, we show that physics-informed training can be interpreted as modifying the effective prior through a data-dependent reweighting, leading to a tilted measure that combines prior information with consistency constraints.

The paper is organized as follows. Section \ref{sec:illposed} reviews classical formulations of regularization and their equivalence to Bayesian estimation. Section \ref{sec:nn_inverse} develops the learned inverse operator framework and establishes the connection between sampling measures and regularization. Section \ref{convergence} analyzes convergence and sources of error. Numerical experiments in Section \ref{experiments} illustrate how different sampling strategies reproduce classical regularization behavior across linear and nonlinear inverse problems. Section~\ref{discussion} concludes with a discussion of the results and directions for future work.

\subsection*{Contributions}

The main contributions of this work are as follows:

\begin{itemize}

\item We establish a direct connection between learned inverse operators and classical regularization theory. In particular, we show that the sampling measure used to generate synthetic training data defines an implicit prior, and hence a regularization functional.

\item We prove that, in the infinite-data limit, a neural network trained to minimize mean squared error converges to the conditional expectation $\mathbb{E}[m \mid d]$, i.e. the Bayes estimator associated with the sampling measure. This provides a precise interpretation of learned inversion as Bayesian estimation.

\item We show that incorporating physics-based consistency terms into the training objective can be interpreted as inducing a data-dependent reweighting of the prior, leading to a tilted measure that modifies the effective regularization.

\item We derive a bias--variance decomposition for learned inverse operators, identifying approximation error due to the choice of sampling measure and network class, estimation error due to finite training data, and irreducible error arising from non-uniqueness of the inverse problem.

\item We demonstrate through numerical experiments that classical regularization strategies, including Tikhonov, sparsity-promoting, and total variation methods, can be recovered through appropriate choices of the sampling distribution used in training.

\end{itemize}

\subsection*{Related work}

There is a substantial body of work on incorporating learning into inverse problems. 
A comprehensive overview is given in \cite{Arridge2019}, which surveys a range of data-driven approaches including learned iterative methods, variational networks, and generative models.

A number of recent approaches to inverse problems combine learning with classical regularization, most notably plug-and-play priors and learned iterative schemes. In plug-and-play methods, a denoising operator is inserted into an iterative algorithm, implicitly defining a prior through its action on the parameter space. Learned iterative methods similarly unroll classical optimization algorithms and replace components such as gradients or proximal maps with neural networks \cite{Adler2018, Hauptmann2018}. In both cases, regularization is introduced through the learned operator, but remains embedded within an iterative reconstruction procedure. In contrast, the approach considered here learns the inverse operator  without iteration. The regularization is not imposed through a proximal map or denoiser, but instead through the sampling measure $\mu$ used to generate training data. As shown in Theorem 1, this induces a conditional expectation that plays the same role as a Bayesian estimator with prior $\mu$. Thus, rather than modifying an optimization algorithm, the learned operator replaces it entirely, while retaining a clear interpretation in terms of prior-driven regularization.


Operator learning provides an alternative perspective in which neural networks are trained to approximate mappings between function spaces. Notable examples include neural operator architectures that learn solution operators for partial differential equations \cite{Kovachki2021}. These approaches emphasize the representation of infinite-dimensional mappings, rather than the incorporation of explicit regularization. In contrast, the present work focuses on the role of the sampling measure in determining the stability and bias of the learned inverse operator.


More recently, generative models and score-based methods have been used to represent priors and perform posterior sampling for inverse problems 
\cite{Bora2017,Song2021}. These approaches aim to approximate the full posterior distribution, rather than a single estimator.


Finally, the role of priors in determining well-posedness and posterior behavior has been extensively studied in the Bayesian inverse problems literature
\cite{Stuart2010,DashtiStuart2017}. The present work shows that these same considerations arise naturally in the design of training data for learned inversion.

\section{Regularization for Ill-posed Inverse Problems}
\label{sec:illposed}
Consider the inverse problem 
\[d=G(m) + \varepsilon , \ \ \ 
G :  \mathcal{M} \to \mathcal{D}, \ \ \ \varepsilon \sim \nu
\]
where 
 $\mathcal{M}$ and $\mathcal{D}$ are separable Banach spaces and $G$ a non-injective linear or   nonlinear forward operator. The inverse problem can be ill-posed due to existence, uniqueness or stability of the solution.  In this section we will discuss three viewpoints to making the problem well-posed: variational regularization, Maximum A posteriori Estimation (MAP), and weak constraint with Lagrange multipliers.

For variational regularization we will consider a data misfit in an arbitrary normed vector space with convex, lower semicontinuous regularization functional
 $R : \mathcal{M} \to [0,\infty]$
 \begin{equation}
\Phi(m) = \|G(m)-d\| + \lambda R(m),
\label{eq:opt_general}
\end{equation}
and regularization parameter $\lambda > 0$. 
The optimal solution of the ill-posed problem is $\arg\min_{m \in \mathcal{M}} \Phi(m)$.  The regularization parameter $\lambda$  controls stability and bias, and must be chosen to ensure well-posedness.  In this formulation it is often chosen as small as possible so as not to change the problem too much.

We now take the Bayesian view of solving ill-posed problems.  Let $l(m;d)=\|G(m)-d\|$ define the loss and assume the likelihood
\[
L(d|m) \propto \exp(-l(m;d)).
\]
If the prior density 
\[
p(m) \propto \exp(-\lambda R(m)),
\]
then the posterior density is 
\[
p(m|d) 
\propto \exp\left(
-l(m;d) - \lambda R(m)
\right).
\]
The maximum a posteriori (MAP) estimator therefore satisfies
\begin{equation}
m_{\mathrm{MAP}}
\in \arg\min_{m \in \mathcal{M}}
\left\{
\|G(m)-d\| + \lambda R(m)
\right\}.
\label{eq:map_equivalence}
\end{equation}
Hence, variational regularization and MAP estimation can be viewed as the same optimization problem.  However, in MAP estimation the regularization term represents prior information and the regularization parameter reflects prior uncertainty.

Consider now the weakly constrained  problem
\begin{equation}
\min_{m \in \mathcal{M}} \ \|G(m)-d\|
\quad \text{subject to} \quad C(m)\leq \delta,
\label{eq:constrained_problem}
\end{equation}
where $C:=\mathcal{M}\to \mathcal{Z}$ encodes structural constraints.  These constraints could incorporate physical laws, or specify bounds on the parameters \cite{Mead_Renaut_2010}.  We incorporate the constraint by adding an extra term
into the objective function and introduce a Lagrange multiplier $\lambda_L$.  If the constraint is enforced quadratically we have 
\begin{equation}
\min_{m \in \mathcal{M}}
\left\{
\|G(m)-d\|+ \frac{\lambda_L}{2}\|C(m)\|_{\mathcal{Z}}^2
\right\}.
\label{eq:penalty}
\end{equation}
This effectively circumscribes an ellipsoid constraint around the space of possible values of $m$ defined by $C(m)$. 
In this case, weak enforcement of constraints via Lagrange penalties is equivalent to regularization with
\[
R(m) = \frac12 \|C(m)\|_{\mathcal{Z}}^2.
\]
Consequently, variational regularization, MAP estimation, and weak constraint enforcement are mathematically equivalent within a unified variational framework. Table \ref{table_1} shows examples of prior distributions for MAP estimates and their corresponding regularization functionals. 

\begin{table}[h]
\begin{center}
\begin{tabular}{|l|c|c|c|c|}\hline
Prior Distribution & Gaussian & Laplace & Total Variation& Uniform \\ \hline
Regularization Operator & $\|m\|_2^2$ & $\| m \|_1 $ & $\| \nabla m \|_1 $ & None  \\ \hline
\end{tabular}
\caption{MAP and Regularization equivalences}\label{table_1}
\end{center} 
\end{table}

\section{Learning an Inverse Operator via Neural Networks} \label{tb:equivalences}
\label{sec:nn_inverse}

We now consider the construction of an approximate inverse operator using supervised learning. The focus here is underdetermined problems with a single realization of observational data $d_{obs}$.  Training data is synthetically generated from the forward operator $G(m)$. This requires choosing 
parameter values $m_{i}$ and solving the forward model for $d_{i}$. We will show that the sampling distribution used to generate training data defines a regularization functional. 

\section*{Synthetic Training Data}

Define
 $\mu$ a probability measure on $\mathcal{M}$. We generate training pairs
\[
m_{i} \sim \mu,
\qquad
d_{i} = G(m_{i}) + \varepsilon_i.
\] 
The data  are a realization of the random variable $G(m_{i}) + \varepsilon_i$.  The sampling distribution $\mu$ induces a joint distribution $\rho(m,d)$ through $G$, i.e.\ $\rho(m,d)=\mu(m)\nu(d-G(m))$.
As noted in \cite{latz_2020}  if $m_i$ and $\varepsilon$ are independent variables defined on an underlying probability space $(\Omega ,A , P)$ then ${d_i} \in A$.    For example, if we assume that $\varepsilon_i \sim \mathcal{N} (0,C)$ with  $m_{i} \sim \mu$ independent, then $d_i\sim \mathcal{N} (G(m_i),cov(G(m_i))+C)$.  

The next step is to train a neural network
\[
G^\dagger_\theta : \mathcal{D} \to \mathcal{M}.
\]
In practice, the inverse operator $G^\dagger_\theta$ is restricted to
a \emph{hypothesis class} $\mathcal{H}$, which is the set of functions
representable by a neural network with a fixed architecture and
parameter space $\Theta$:
\[
\mathcal{H} = \left\{G^\dagger_\theta : \mathcal{D} \to \mathcal{M}
\;\middle|\; \theta \in \Theta \right\}.
\]
The choice of architecture determines $\mathcal{H}$ and hence the
expressive capacity of the learned operator. We train the network by
minimizing the empirical risk over $\mathcal{H}$:
\begin{equation} \label{eq:empirical_risk}
\mathcal{J}_K(\theta) = \frac{1}{K}\sum_{i=1}^K
\|G^\dagger_\theta(d_i) - m_i\|^2, \quad \theta \in \Theta.
\end{equation}
%
The optimal inverse estimate from the neural network is then \[m_{pred}=G^\dagger_\theta(d_{obs}).\]

Bayesian methods also seek to minimize the expected risk. This is not done for all possible parameters, rather for those
 drawn at random according to the prior probability. Similarly, $G^\dagger_\theta$ is found by minimizing (\ref{eq:empirical_risk}) over the training data generated by $m \sim \mu$ and in finite dimensions
 $\{d_i,m_i\}_{i=1}^K$.
The difference between the Bayes estimator and the neural network is the former minimizes expected loss with respect to $m$ and the latter
with respect to $\theta$.
As $K \to \infty$, the empirical risk converges to the population functional
\[ \mathcal R(d)=\mathbb{E}_{\mu} \|G^\dagger_\theta(d)-m\|^2  \]

The following result in statistical decision theory \cite{Wainwright2019} shows that the training data distribution in learned inversion is mathematically equivalent to the regularization functional in classical inverse problems.

\begin{theorem}\label{thm:samp_reg}
Let $d=G(m)+\varepsilon$, with $m\sim \mu$ and $\mathbb E \| m \|^2 < \infty$.  Then the minimizer of
\[
\mathcal R(d)=\mathbb E_{\mu} \|G_\theta^\dagger(d)-m\|^2
\]
over $\theta$ is
\[
G^\dagger_*(d)=\mathbb E_\mu[m\mid d].
\]
Moreover the minimum risk is 
\[
\mathcal R_*(d)
=
{\var}(m\mid d).
\]
Consequently, the training measure $\mu$ determines the regularizing mapping.
\end{theorem}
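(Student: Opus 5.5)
The plan is the standard $L^2$ orthogonal-projection argument, carried out pointwise in $d$ after conditioning. First I will make the setting precise. Under the joint law $\rho(m,d)=\mu(m)\nu(d-G(m))$, the hypothesis $\mathbb E\|m\|^2<\infty$ guarantees that the conditional expectation $m^*(d):=\mathbb E_\mu[m\mid d]$ exists as a measurable function of $d$, is square integrable, and is characterized by
\[
\mathbb E\bigl[\langle m-m^*(d),\,h(d)\rangle\bigr]=0
\]
for every square-integrable measurable $h$ of $d$. Here I take $\mathcal M$ to be a Hilbert space, or finite dimensional as in the paper's numerical setting, so that $\|\cdot\|^2$ comes from an inner product. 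I will interpret $\mathcal R$ as the risk conditional on $d$, namely $\mathcal R(d)=\mathbb E_\mu[\|G^\dagger_\theta(d)-m\|^2\mid d]$, and separately as its average over $d$. I will also assume that the minimization over $\theta$ is effectively a minimization over all measurable maps $\mathcal D\to\mathcal M$, i.e.\ that $\mathcal H$ is rich enough to contain $m^*$, which is the infinite-capacity idealization.

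Next comes the key decomposition. For any candidate $f=G^\dagger_\theta$, I add and subtract $m^*(d)$ and expand the square:
\[
\mathbb E\bigl[\|f(d)-m\|^2\mid d\bigr]=\|f(d)-m^*(d)\|^2+2\bigl\langle f(d)-m^*(d),\,\mathbb E[m^*(d)-m\mid d]\bigr\rangle+\mathbb E\bigl[\|m^*(d)-m\|^2\mid d\bigr].
\]
The cross term vanishes because $f(d)-m^*(d)$ is $d$-measurable and can therefore be pulled out of the conditional expectation, while $\mathbb E[m-m^*(d)\mid d]=0$ by definition.

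This leaves
\[
\mathcal R(d)=\|f(d)-m^*(d)\|^2+\var(m\mid d).
\]
The second term does not depend on $\theta$, so the risk is minimized exactly when $f(d)=m^*(d)$. In the averaged version this holds $\rho$-almost surely, and the minimizer is unique up to null sets. At the minimizer the residual risk is $\var(m\mid d)=\mathbb E[\|m-\mathbb E[m\mid d]\|^2\mid d]$, which is the claimed minimum risk. The concluding sentence of the theorem then follows because $m^*$ depends on $\mu$ only through the posterior induced by $\mu$ and $\nu$: changing $\mu$ changes the map $d\mapsto\mathbb E_\mu[m\mid d]$, exactly as changing the prior, or equivalently $R$ in \eqref{eq:map_equivalence}, changes the regularized estimator.

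The main obstacle is not the algebra but the rigorous justification in the Banach-space setting the paper adopts. There are three issues: existence of conditional expectation (a Bochner conditional expectation exists for separable spaces), the need for an inner product to kill the cross term (I will restrict to Hilbert $\mathcal M$ or finite dimensions), and the gap between minimizing over $\theta\in\Theta$ and minimizing over all measurable maps. I will handle the last issue by stating that the result holds whenever $m^*\in\overline{\mathcal H}$ in $L^2(\rho)$. Otherwise the same decomposition shows that the optimal $\theta$ is the $L^2(\rho)$-projection of $m^*$ onto $\mathcal H$, which anticipates the approximation-error term in the later bias--variance analysis.
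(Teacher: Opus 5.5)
Your proposal is correct and follows the paper's proof: add and subtract $\mathbb E_\mu[m\mid d]$, kill the cross term by pulling the $d$-measurable factor out of the conditional expectation, and then minimize the remaining term $\|G^\dagger_\theta(d)-\mathbb E_\mu[m\mid d]\|^2$, leaving $\mathrm{Var}(m\mid d)$. Your explicit Hilbert-space and realizability ($m^*\in\mathcal H$) assumptions are ones the paper's argument uses tacitly; note only that $m^*\in\overline{\mathcal H}$ yields the infimum of the risk, not a minimizer attained at some $\theta$.
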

\begin{proof}
We fix $d$, then insert and subtract the conditional mean,
\begin{eqnarray*}\|G^\dagger_\theta(d)-m\|^2&=&\|G^\dagger_\theta(d)-\mathbb E[m \mid d]\|^2 + \|\mathbb E[m \mid d]-m\|^2\\ && +2\langle G^\dagger_\theta(d)-\mathbb E[m \mid d], \mathbb E[m \mid d]-m\rangle.
\end{eqnarray*}
Now take the conditional expectation in $m\mid d$, then
\[\mathbb E\left[ \langle G^\dagger_\theta(d)-\mathbb E[m \mid d], \mathbb E[m \mid d]-m\rangle\right] = \langle G^\dagger_\theta(d)-\mathbb E[m \mid d], \mathbb E[m \mid d]-\mathbb E[m \mid d]\rangle.\]
The cross term vanishes and 
\[
\mathbb E_\mu\left[ \|G^\dagger_\theta(d)-m\|^2 \mid d\right]
=
\|G^\dagger_\theta(d)-\mathbb E_\mu[m \mid d]\|^2
+ \var (m\mid d).\]
Now integrate over $d$ and minimize over $\theta$. The second term does not depend on $\theta$ and the first term is minimized by $G^\dagger_\theta(d)=\mathbb E_\mu[m \mid d]$.
\end{proof}

In Bayesian inversion, the prior $\mu$ appears explicitly. In neural inversion, the $\mu$ appears implicitly through the expectation defining risk.
Yet in both cases the same conditional expectation arises. Thus sampling from $\mu$ to create training data embeds the prior directly into the learning objective.  For example, if  $\mu\sim \mathcal N(m_0,1/ \lambda)$ then this parallels Tikhonov regularization that minimizes
\[\Phi(m) = \|G(m)-d\|_2^2 + \lambda \|m-m_0\|_2^2.\]
Here, $\lambda$ controls how strongly prior information $m_0$ influences the solution.  

Even when the Bayes inverse is attained, the risk is not zero, it is the posterior variance averaged over the data distribution. The posterior variance represents  uncertainty arising from non-injectivity of $G$. If $G$ is injective and noise-free on the support of $\mu$, the posterior variance term vanishes. Otherwise, it persists no matter how accurate the network may be. If we restrict  $\mu$ to a narrower subset of $\mathcal M$ the posterior variance is reduced, but we increase the bias outside that subset.  This is similar to the bias and stability tradeoff in regularization theory.
	
Regularization makes the problem well-posed by restricting the possible solutions. For the neural network inverse operator, the choice of $\mu$ restricts the solution manifold.  Thus, sampling strategy plays the same structural role as regularization.  Figure \ref{fig:regularization} illustrates how the choice of $\mu$ or regularization operator restricts the 
geometry of the solution in the case of $L_2$ regularization ($\mu$ Gaussian) or $L_1$ regularization ($\mu$ Laplace).
\begin{figure}[h]
\centering
\includegraphics[width=.40\textwidth]{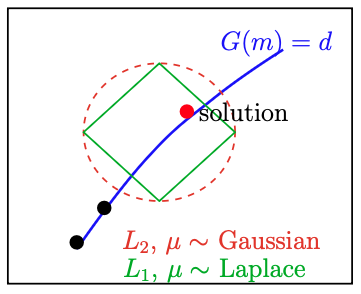}
\caption{Regularization by prior geometry in an ill-posed inverse problem. The points on the curve $G(m)=d$ represent non-unique solutions. The prior or constraint induces a unique solution.}
\label{fig:regularization}
\end{figure}

\section*{Physics Informed Networks (PINN)}
Physics informed Networks (PINN) were introduced in \cite{raissi2019physicsinformed} and have been used extensively, including for inverse problems in \cite{perezbernal2025physicsinformedneuralnetworkssolve,Jagtap2022PhysicsinformedNN}.   PINNs optimize a cost function that adds information about the physics, in this case $d=G(m)$, to the risk.  The neural network parameters $\theta$ are found by minimizing a regularized empirical risk.
As $K \to \infty$, this empirical risk converges to the population functional
\[ \mathcal R_\alpha(d)=\mathbb{E}_{\mu} \left[ \|G^\dagger_\theta(d)-m\|^2 + \alpha  \| G(G^\dagger_\theta(d))-d\|^2 \right]. \]

\begin{theorem} \label{thm_PINN}
Let $d = G(m) + \varepsilon$, with $m \sim \mu$ and
$\mathbb{E}\|m\|^2 < \infty$, and assume $\alpha > 0$.
Define the tilted measure $\mu_\alpha$ with density
\[
d\mu_\alpha \propto e^{-\alpha\|G(m)-d\|^2} \, \mu(dm).
\]
Then the minimizer of
\[
\mathcal{R}_\alpha(d)
= \mathbb{E}_{\mu}\!\left[\|G^\dagger_\theta(d) - m\|^2
+ \alpha\|G(G^\dagger_\theta(d)) - d\|^2\right]
\]
over $\theta$ is
\[
G^\dagger_{\alpha *}(d) = \mathbb{E}_{\mu_\alpha}[m \mid d].
\]
Moreover the minimum risk is
\[
\mathcal{R}_{\alpha *}(d) = \mathrm{Var}_{\mu_\alpha}(m \mid d).
\]
\end{theorem}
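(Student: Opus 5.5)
The plan is to reduce Theorem~\ref{thm_PINN} to Theorem~\ref{thm:samp_reg} with $\mu$ replaced by the tilted measure $\mu_\alpha$. As in the proof of Theorem~\ref{thm:samp_reg}, I will fix $d$ and write $u=G^\dagger_\theta(d)$. I then work with the conditional risk
\[
\mathbb E_\mu\!\left[\|u-m\|^2 \mid d\right]+\alpha\|G(u)-d\|^2 ,
\]
since integrating over $d$ and minimizing over $\theta$ can be done pointwise, exactly as before. By the hypothesis $\mathbb E\|m\|^2<\infty$, all conditional moments below are finite, and the normalizing constant
\[
Z_\alpha(d)=\int e^{-\alpha\|G(m)-d\|^2}\,\mu(dm\mid d)
\]
lies in $(0,1]$. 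So $\mu_\alpha(\cdot\mid d)$ is a well-defined probability measure with finite second moment.

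The first step is to move the physics penalty from the output $u$ onto the sampled parameter $m$. The idea is to read the consistency term the way PINN training actually evaluates it: on the training pairs $(m,d)$ at the optimum, where the network output reproduces the data of the sample. On those pairs, $\|G(u)-d\|^2$ plays the role of $\|G(m)-d\|^2$. I will then use the Gibbs variational identity
\[
-\log Z_\alpha(d)=\min_{q\ll\mu(\cdot\mid d)}\Big\{\alpha\,\mathbb E_q\|G(m)-d\|^2+\mathrm{KL}\big(q\,\|\,\mu(\cdot\mid d)\big)\Big\},
\]
whose unique minimizer is $q=\mu_\alpha(\cdot\mid d)$. This exhibits the penalized risk as an unpenalized squared loss taken under the effective conditional law $\mu_\alpha(\cdot\mid d)$, up to terms independent of $u$. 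In other words, the exponential tilt by $e^{-\alpha\|G(m)-d\|^2}$ is precisely how adding $\alpha\|G(\cdot)-d\|^2$ to the risk reweights the prior.

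The second step is to repeat the insert-and-subtract computation of Theorem~\ref{thm:samp_reg} under $\mu_\alpha(\cdot\mid d)$:
\[
\mathbb E_{\mu_\alpha}\!\left[\|u-m\|^2\mid d\right]=\|u-\mathbb E_{\mu_\alpha}[m\mid d]\|^2+\mathrm{Var}_{\mu_\alpha}(m\mid d).
\]
The cross term vanishes because $\mathbb E_{\mu_\alpha}[\,\mathbb E_{\mu_\alpha}[m\mid d]-m\mid d\,]=0$. Hence the minimizer is $u=\mathbb E_{\mu_\alpha}[m\mid d]$, and the minimal value is $\mathrm{Var}_{\mu_\alpha}(m\mid d)$. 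These are the two claimed formulas for $G^\dagger_{\alpha*}$ and $\mathcal R_{\alpha*}$.

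The main obstacle is the first step, namely justifying the exchange of $\|G(u)-d\|^2$ for $\|G(m)-d\|^2$. Taken literally, the pointwise problem
\[
\min_u\ \|u-\mathbb E_\mu[m\mid d]\|^2+\alpha\|G(u)-d\|^2
\]
has a proximal point of the physics penalty as its minimizer. That point does not coincide with the tilted mean in general; for example, when $G$ is linear and $\mu$ is Gaussian, one can compare the two explicitly. My first attempt will therefore make precise the convention under which the stated equality is exact: the consistency term is evaluated on the training samples, as it is in practice for PINNs. Under that convention the penalty is $u$-independent and enters only as the tilt, and the Gibbs identity above carries the argument. If the literal reading is insisted upon, I would instead aim to show that the two minimizers agree to leading order as $\alpha\to0$, or in the regime where $G$ is injective and noise-free on the support of $\mu_\alpha$. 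I would state the theorem's conclusion under that interpretation.
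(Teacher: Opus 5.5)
Your diagnosis of the literal problem is right, but the repair in your first step does not produce the tilted mean, so there is a genuine gap. Under your convention the consistency term becomes $\alpha\|G(m)-d\|^2$. This term does not involve $u$ at all; on a training pair it is just $\alpha\|\varepsilon_i\|^2$. It is therefore an additive constant, and with $\bar m=\mathbb{E}_\mu[m\mid d]$,
\[
\mathbb{E}_\mu\left[\|u-m\|^2+\alpha\|G(m)-d\|^2\mid d\right]=\|u-\bar m\|^2+\mathrm{Var}_\mu(m\mid d)+\alpha\,\mathbb{E}_\mu\left[\|G(m)-d\|^2\mid d\right].
\]
So the minimizer is still the untilted $\bar m$, and you have only reproved Theorem~\ref{thm:samp_reg}.

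The Gibbs variational identity cannot change this. It characterizes $\mu_\alpha(\cdot\mid d)$ as the minimizer over measures $q$ of a free energy in which $u$ does not appear. Nothing in it turns $\mathbb{E}_\mu\|u-m\|^2$ into $\mathbb{E}_{\mu_\alpha}\|u-m\|^2$. A tilt of the integrating measure corresponds to a \emph{multiplicative} weight, $\mathbb{E}_{\mu_\alpha}[\|u-m\|^2\mid d]=Z_\alpha(d)^{-1}\,\mathbb{E}_\mu[e^{-\alpha\|G(m)-d\|^2}\|u-m\|^2\mid d]$. It does not correspond to an additive penalty, whether that penalty is evaluated at $G(u)$ or at $G(m)$. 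Your convention also does not match practice: PINN training evaluates the residual at the network output, $G(G^\dagger_\theta(d_i))$, which is exactly the literal reading. Appealing to ``the optimum, where the output reproduces the data'' presupposes the minimizer.

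Your fallback fails too. Take $G$ linear and $m\mid d\sim\mathcal N(\bar m,C)$, and set $r=G\bar m-d$. The literal minimizer is $\bar m-\alpha(I+\alpha G^{\top}G)^{-1}G^{\top}r$, while the tilted mean is $\bar m-\alpha(\frac12 C^{-1}+\alpha G^{\top}G)^{-1}G^{\top}r$. These differ already at first order in $\alpha$ ($-\alpha G^{\top}r$ versus $-2\alpha CG^{\top}r$) unless $2C$ acts as the identity on the range of $G^{\top}$.

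In fact the statement as written is false, so no argument can close this gap. Take $\mu=\delta_{m_0}$ and $G$ linear and surjective. Then $\mu_\alpha=\delta_{m_0}$, so the theorem predicts the minimizer $m_0$ with minimum risk $0$. But $\mathcal R_\alpha(d)=\|u-m_0\|^2+\alpha\|Gu-d\|^2$ has gradient $2\alpha G^{\top}(Gm_0-d)\neq 0$ at $u=m_0$. It also has a strictly positive minimum whenever $d\neq Gm_0$, which holds almost surely for Gaussian noise.

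The paper's proof bridges the same step by asserting that the additive term $\alpha\|G(G^\dagger_\theta(d))-d\|^2$ and the weight $e^{-\alpha\|G(m)-d\|^2}$ ``penalize the same discrepancy \ldots so they share the same minimizer''. That is precisely the identification you rightly doubted, and it is not valid.

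What can be proved is one of two corrected statements. With the additive penalty, the minimizers are the points of $\arg\min_u\{\|u-\bar m\|^2+\alpha\|G(u)-d\|^2\}$, and the minimum conditional risk is $\mathrm{Var}_\mu(m\mid d)$ plus that minimal value. Alternatively, replace the additive penalty by the importance-weighted loss $\mathbb{E}_\mu[e^{-\alpha\|G(m)-d\|^2}\|G^\dagger_\theta(d)-m\|^2]$. Then your second step (Theorem~\ref{thm:samp_reg} applied under $\mu_\alpha$) goes through verbatim and yields $\mathbb{E}_{\mu_\alpha}[m\mid d]$, with minimum conditional risk $Z_\alpha(d)\,\mathrm{Var}_{\mu_\alpha}(m\mid d)$.
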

\begin{proof}
The second term in the risk, $\alpha\|G(G^\dagger_\theta(d)) - d\|^2$,
depends on $d$ and on the network output $G^\dagger_\theta(d)$, but not
on the integration variable $m$. Therefore
\begin{eqnarray*}
\mathcal{R}_\alpha(d)
&=& \mathbb{E}_\mu\!\left[\|G^\dagger_\theta(d) - m\|^2\right]
+ \alpha\|G(G^\dagger_\theta(d)) - d\|^2 \\
&=& \int \left[\|G^\dagger_\theta(d) - m\|^2
+ \alpha\|G(G^\dagger_\theta(d)) - d\|^2\right] \mu(dm),
\end{eqnarray*}
We claim that minimizing $\mathcal{R}_\alpha(d)$ over $\theta$ is
equivalent to minimizing
$\mathbb{E}_{\mu_\alpha}[\|G^\dagger_\theta(d)-m\|^2]$ over $\theta$.
To see this, let
\[
Z = \int e^{-\alpha\|G(m)-d\|^2}\,\mu(dm)
\]
be the normalizing constant of $\mu_\alpha$. Since $\mu_\alpha$ is
absolutely continuous with respect to $\mu$ with Radon-Nikodym
derivative
\[
\frac{d\mu_\alpha}{d\mu}(m)
= \frac{e^{-\alpha\|G(m)-d\|^2}}{Z},
\]
the change of measure formula gives
\begin{eqnarray*}
\mathbb{E}_{\mu_\alpha}\!\left[\|G^\dagger_\theta(d) - m\|^2\right]
&=&\int \|G^\dagger_\theta(d) - m\|^2\,\mu_\alpha(dm)\\
&=& \frac{1}{Z}\int \|G^\dagger_\theta(d) - m\|^2\,
e^{-\alpha\|G(m)-d\|^2}\,\mu(dm).
\end{eqnarray*}
The key observation is that the additive penalty
$\alpha\|G(G^\dagger_\theta(d))-d\|^2$ in $\mathcal{R}_\alpha(d)$
and the multiplicative likelihood weight $e^{-\alpha\|G(m)-d\|^2}$
in $\mu_\alpha$ arise from the same Gaussian likelihood, and both
are minimized when $G^\dagger_\theta(d)$ is consistent with the data
$d$ through the forward operator $G$. Specifically, for fixed $d$,
\[
\mathcal{R}_\alpha(d)
= \int \|G^\dagger_\theta(d)-m\|^2\,\mu(dm)
+ \alpha\|G(G^\dagger_\theta(d))-d\|^2
\]
and
\[
Z\cdot\mathbb{E}_{\mu_\alpha}\!\left[\|G^\dagger_\theta(d)-m\|^2\right]
= \int \|G^\dagger_\theta(d)-m\|^2\,
e^{-\alpha\|G(m)-d\|^2}\,\mu(dm).
\]
Both expressions are minimized by the same $\theta$: the additive
term $\alpha\|G(G^\dagger_\theta(d))-d\|^2$ and the likelihood weight
$e^{-\alpha\|G(m)-d\|^2}$ penalize the same discrepancy between the
forward model evaluated at the network output and the observed data,
so they share the same minimizer. Since $Z$ does not depend on
$\theta$, minimizing $\mathcal{R}_\alpha(d)$ over $\theta$ is
equivalent to minimizing
\[
\mathbb{E}_{\mu_\alpha}\!\left[\|G^\dagger_\theta(d) - m\|^2\right]
\]
over $\theta$. The result then follows from
Theorem~\ref{thm:samp_reg} with $\mu_\alpha$ in place of $\mu$.
\end{proof}

For PINN, the tilted measure $\mu_\alpha$ is now the effective prior. Thus $\alpha$ may be interpreted as inducing additional regularization through a data-dependent re-weighting of the prior.

\section*{Finite Training Data}
In the previous sections, we derived results at the population level over $\mu$.  Here 
we address the  finite-sample reality of training. The following theorem addresses what happens when there are only finitely many samples.  The subsequent decomposition follows from  bias-variance arguments in statistical learning theory (see, e.g., \cite{hastie01statisticallearning,Wainwright2019}), specialized here to the inverse problem setting.  
\begin{theorem}\label{thm:finite}
Let $S=\{(m_i,d_i)\}_{i=1}^K$, $m_i \sim \mu$,
$d_i = G(m_i) + \varepsilon_i$, and $\varepsilon_i \sim \nu$.
Let $\theta_S$ denote the parameters learned from dataset $S$, and
define the inverse operator
\[
G^\dagger_{\theta_S} : \mathcal{D} \to \mathcal{M}.
\]
The minimizer of
\[
\mathcal{R}_S(d) = \mathbb{E}_{S}\left[
\|G^\dagger_{\theta_S}(d) - m\|^2\right]
\]
over $\theta$ is
\[
G^\dagger_{S*}(d) = \mathbb{E}_{S}\left[\mathbb{E}_\mu[m \mid d]\right].
\]
Moreover, the risk admits the bias-variance decomposition
\begin{eqnarray*}
\mathcal{R}_S(d)
&=& \|\mathbb{E}_S G^\dagger_{\theta_S}(d) - G^\dagger_{S*}(d)\|^2 \\
&&+\; \mathbb{E}_S\!\left[\|G^\dagger_{\theta_S}(d)
   - \mathbb{E}_S G^\dagger_{\theta_S}(d)\|^2\right] \\
&&+\; \mathbb{E}_S\!\left[\mathrm{Var}(m \mid d)\right],
\end{eqnarray*}
where the three terms are the squared bias, the variance of the
estimator over datasets $S$, and the irreducible noise, respectively.
The minimum risk is
\[
\mathcal{R}_{S*}(d) = \mathbb{E}_S\!\left[\mathrm{Var}(m \mid d)\right].
\]
\end{theorem}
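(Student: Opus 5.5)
The argument is the classical bias--variance decomposition, obtained by inserting and subtracting terms twice in the same way as the proof of Theorem~\ref{thm:samp_reg}. I read $\mathcal R_S(d)$ as an expectation over two independent sources of randomness: the training set $S$ (which determines $\theta_S$) and a fresh pair $(m,d)$ drawn from $\rho$ with $d$ fixed, so that $m$ is distributed according to $m\mid d$. The fresh pair is independent of $S$, so $G^\dagger_{\theta_S}(d)$ is conditionally independent of $m$ given $d$. To keep the notation light, write $\bar G(d)=\mathbb E_S G^\dagger_{\theta_S}(d)$ for the dataset-averaged operator and $m^*(d)=\mathbb E_\mu[m\mid d]$ for the Bayes operator. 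Because $m^*(d)$ does not depend on $S$, the target $G^\dagger_{S*}(d)=\mathbb E_S[\mathbb E_\mu[m\mid d]]$ in the statement equals $m^*(d)$.

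\textbf{Step 1: separate the noise term.} Fix $S$ and $d$. The insert-and-subtract argument of Theorem~\ref{thm:samp_reg}, with $G^\dagger_{\theta_S}$ in place of $G^\dagger_\theta$, gives
\[
\mathbb E\bigl[\|G^\dagger_{\theta_S}(d)-m\|^2\mid d,S\bigr]=\|G^\dagger_{\theta_S}(d)-m^*(d)\|^2+\var(m\mid d).
\]
The cross term vanishes because $G^\dagger_{\theta_S}(d)$ is fixed given $(d,S)$ and $\mathbb E[m-m^*(d)\mid d,S]=0$ by independence. Taking $\mathbb E_S$ of both sides produces the irreducible term $\mathbb E_S[\var(m\mid d)]$.

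\textbf{Step 2: split the remaining term.} Insert and subtract $\bar G(d)$:
\[
\|G^\dagger_{\theta_S}-m^*\|^2=\|G^\dagger_{\theta_S}-\bar G\|^2+\|\bar G-m^*\|^2+2\langle G^\dagger_{\theta_S}-\bar G,\ \bar G-m^*\rangle.
\]
Under $\mathbb E_S$ the cross term is zero, since $\bar G-m^*$ is deterministic in $S$ and $\mathbb E_S[G^\dagger_{\theta_S}-\bar G]=0$. What remains is the squared bias $\|\bar G(d)-G^\dagger_{S*}(d)\|^2$ and the variance $\mathbb E_S\|G^\dagger_{\theta_S}(d)-\bar G(d)\|^2$, which together with Step~1 is exactly the stated three-term decomposition. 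Both squared norms need finite second moments; I would assume $\mathbb E_S\|G^\dagger_{\theta_S}(d)\|^2<\infty$ alongside the standing assumption $\mathbb E\|m\|^2<\infty$.

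\textbf{Step 3: minimizer and minimum risk.} The bias and variance terms are nonnegative and the noise term does not depend on $\theta$. The risk is therefore bounded below by $\mathbb E_S[\var(m\mid d)]$, and it attains this bound exactly when both terms vanish, that is, when $G^\dagger_{\theta_S}(d)=m^*(d)$ for almost every $S$. In that case $\bar G=G^\dagger_{S*}$, which gives both the minimizer and the minimum risk.

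\textbf{Main obstacle: interpretation, not algebra.} The computation is routine; the difficulty is making the statement well posed. ``Minimizer over $\theta$'' has to mean a minimizer over the map $S\mapsto\theta_S$, and it is achievable only if $m^*$ lies in $\mathcal H$ and the training procedure recovers it for almost every $S$. I would add this realizability assumption explicitly. If it fails, the conclusion holds only as a lower bound, with a positive bias term that represents the architecture's approximation error. I would also state the independence of the test pair from $S$ explicitly, because the cross term in Step~1 vanishes only because of it.
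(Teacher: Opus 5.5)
Your proposal is correct and follows the paper's route. The paper also fixes $d$ and inserts and subtracts $\mathbb{E}_\mu[m\mid d]$ to isolate $\mathbb{E}_S[\mathrm{Var}(m\mid d)]$, then inserts and subtracts $\mathbb{E}_S G^\dagger_{\theta_S}(d)$ to split off bias and variance, and concludes that both vanish when the output equals the deterministic quantity $\mathbb{E}_S[\mathbb{E}_\mu[m\mid d]]$. The paper uses the independence of the test pair from $S$ and the realizability assumption only implicitly, in its cross-term and ``simultaneously minimized'' steps, so making them explicit, as you do, adds rigor without changing the argument.
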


\begin{proof}
We fix $d$ and apply Theorem~\ref{thm:samp_reg} pointwise in $S$.
For each fixed dataset $S$, the conditional mean $\mathbb{E}_\mu[m \mid d]$
minimizes $\mathbb{E}_\mu[\|G^\dagger_{\theta_S}(d) - m\|^2 \mid d]$
with residual $\mathrm{Var}(m \mid d)$.
Inserting and subtracting $\mathbb{E}_\mu[m \mid d]$ and taking the
expectation over $S$, the cross term vanishes since
$\mathbb{E}_S[\mathbb{E}_\mu[m\mid d] - m \mid d] = 0$,
giving
\[
\mathbb{E}_S\!\left[\|G^\dagger_{\theta_S}(d) - m\|^2\right]
= \mathbb{E}_S\!\left[\|G^\dagger_{\theta_S}(d)
  - \mathbb{E}_\mu[m \mid d]\|^2\right]
+ \mathbb{E}_S\!\left[\mathrm{Var}(m \mid d)\right].
\]
The second term does not depend on $\theta$, so the minimizer over
$\theta$ is that of the first term. We now decompose the first term
by inserting and subtracting $\mathbb{E}_S G^\dagger_{\theta_S}(d)$:
\begin{eqnarray*}
\|G^\dagger_{\theta_S}(d) - \mathbb{E}_\mu[m\mid d]\|^2
&=& \|G^\dagger_{\theta_S}(d)
    - \mathbb{E}_S G^\dagger_{\theta_S}(d)\|^2 \\
&&+\; \|\mathbb{E}_S G^\dagger_{\theta_S}(d)
    - \mathbb{E}_\mu[m \mid d]\|^2 \\
&&+\; 2\langle G^\dagger_{\theta_S}(d)
    - \mathbb{E}_S G^\dagger_{\theta_S}(d),\,
    \mathbb{E}_S G^\dagger_{\theta_S}(d)
    - \mathbb{E}_\mu[m \mid d]\rangle.
\end{eqnarray*}
Taking the expectation over $S$, the cross term vanishes since
$\mathbb{E}_S[G^\dagger_{\theta_S}(d)
- \mathbb{E}_S G^\dagger_{\theta_S}(d)] = 0$, giving the
bias-variance decomposition stated in the theorem. Both the squared
bias and the variance term are non-negative. We now argue that they
are simultaneously minimized by the same $\theta$.

The squared bias 
is zero if and only if
$\mathbb{E}_S G^\dagger_{\theta_S}(d) 
= \mathbb{E}_S[\mathbb{E}_\mu[m\mid d]]$.
The variance term 
measures the fluctuation
of $G^\dagger_{\theta_S}(d)$ around its mean over $S$.
At the minimizer 
the network output equals the deterministic quantity
$\mathbb{E}_S[\mathbb{E}_\mu[m\mid d]]$, which does not depend on $S$.
The variance term therefore also vanishes at this point.
Note that the variance term does depend on $\theta$ for a general
network, since $G^\dagger_{\theta_S}(d)$ fluctuates over $S$;
it is only at the optimal $\theta$ that it vanishes because the
output becomes deterministic. The minimum risk is therefore
\[
\mathcal{R}_{S*}(d) = \mathbb{E}_S\!\left[\mathrm{Var}(m \mid d)\right].
\]

\end{proof}

This decomposition highlights the distinct roles of the sampling measure $\mu$, the hypothesis class $\mathcal{H}$, and the training sample size. In particular, the approximation error is controlled by the choice of $\mu$, reinforcing its interpretation as a regularization mechanism.  The irreducible error corresponds to the conditional variance $\mathrm{Var}(m \mid d)$, which reflects the non-identifiability of the inverse problem \cite{Stuart2010}.

\section{Convergence and Sources of Error in NN Operator}
\label{convergence}

We now discuss the conditions under which the learned inverse operator
$G^\dagger_{\theta_S}$ provides a reliable approximation to the true solution. 
For squared error loss, the optimal predictor is given by the conditional expectation, and the excess risk can be decomposed into approximation and estimation components \cite{Wainwright2019}.  
The bias-variance decomposition in Theorem~\ref{thm:finite} identifies three
contributions to the total risk:
\begin{eqnarray*}
\mathbb{E}_S\left[\|G^\dagger_\theta(d) - m\|^2\right]
&= \underbrace{\|\mathbb{E}_S G^\dagger_\theta(d) - G^\dagger_*(d)\|^2}
             _{\text{approximation error}}
\\
&\quad
 + \underbrace{\mathbb{E}_S\|G^\dagger_\theta(d)
               - \mathbb{E}_S G^\dagger_\theta(d)\|^2}
              _{\text{estimation error}}
 \\
&\quad
 + \underbrace{\mathbb{E}_S\left[\mathrm{\mathrm{Var}}(m \mid d)\right]}
              _{\text{irreducible error}}.
\end{eqnarray*}

The approximation error measures how far the mean prediction of the network is from the
optimal operator $G^\dagger_*(d) = \mathbb{E}_{\mu}[m \mid d]$.
It arises from two distinct sources: the choice of sampling measure $\mu$, and the
expressive capacity of the hypothesis class $\mathcal{H}$ which in practice is determined by the neural
network architecture. Note that even if $\mu$ is well-chosen, the learned operator may
fail to approximate $\mathbb{E}_\mu[m \mid d]$ if the hypothesis
class $\mathcal{H}$ is insufficiently expressive.  The neural network
 may itself impose structural biases such as smoothness
or low-complexity representations.  The regularization
consequences of restricting to a particular architecture have been
studied in the deep learning literature in the context of inverse problems
\cite{ongie2020deep}. 
Here we consider three architectures in Section~5 (MLP, CNN, and DeepONet)  to span a range of
inductive biases. 


The estimation error
term measures the variability of the network's output across different
training datasets $S$ of size $K$.
It reflects the finite-sample nature of learning captured in
Theorem~\ref{thm:finite}, and with only $K$ training pairs, the empirical risk
minimizer $G^\dagger_\theta$ will fluctuate around its mean.
This term vanishes as $K \to \infty$, provided the network is trained to
convergence on each $S$.


The irreducible error term is the conditional variance $\mathrm{Var}(m \mid d)$, which
reflects the fundamental non-uniqueness of the inverse problem.  Even with
perfect knowledge of the forward operator and an infinite amount of training
data, there remains uncertainty in $m$ given $d$ that cannot be resolved.

The inverse problem is made well-posed if it satisfies existence, stability, and convergence.
In this setting, existence is guaranteed by the minimization of the empirical risk
$\mathcal{J}_K(\theta)$, which always produces a network $G^\dagger_\theta$.
Stability corresponds to control of the estimation error, and as $K$
increases, the network's predictions become less sensitive to the particular
choice of training set $S$.
Convergence corresponds to the vanishing of both approximation and estimation errors.

Convergence in this
setting requires not only that $K \to \infty$, but also that the training
distribution $\mu$ is well-specified, and the network architecture becomes sufficiently expressive.
If $\mu$ assumes very low probability near  the true model parameter, the
approximation error will not vanish even as $K \to \infty$.
Quantitative rates of convergence for the posterior measure in
infinite-dimensional models have been established by
\cite{GhosalGhoshVanDerVaart2000}. However, as emphasized in \cite{Arridge2019}, consistency may fail for natural priors even with
an infinite amount of data, meaning the prior $\mu$ may have a large and
persistent influence on the posterior. In infinite-dimensional space, both in the Bayesian setting and in neural network approximations, the prior or training data distribution is not asymptotically negligible, and it can dominate the posterior unless carefully chosen.

The tilted measure $\mu_\alpha$ of Theorem~\ref{thm_PINN} offers a
partial remedy when the prior $\mu$ is misspecified. By weighting
$\mu$ by the likelihood $e^{-\alpha\|G(m)-d\|^2}$, the effective
prior concentrates more mass on model parameters that are consistent
with the data through the forward operator. This may reduce the
approximation error~(I) relative to $\mu$ alone, provided the
physics residual is informative. However, increasing $\alpha$
is not a viable substitute for a well-chosen prior. In the presence
of noise $\varepsilon \sim \nu$, the exact constraint $G(m) = d$ is
never satisfied by the true model, so concentrating $\mu_\alpha$
too strongly on the data-consistent set amplifies the effect of
noise and may destabilize the reconstruction. This is the same
instability that motivates regularization in the first place:
the ill-posed nature of the inverse problem means that enforcing
data consistency without regularization does not yield a reliable
estimate. Thus $\alpha$ is best understood as a secondary
regularization parameter that refines a well-chosen prior $\mu$,
rather than as a mechanism for correcting prior misspecification.
This is consistent with the numerical findings of Section~5, where
the physics-informed objective improves reconstruction quality only
when the prior is already well matched to $m_\mathrm{true}$.


%
%
%
%
%
%
%
%

\section{Numerical Experiments}
\label{experiments}

We consider three inverse problems of increasing complexity. In each problem, we train $G^\dagger_\theta$ under four prior families (Gaussian, Laplace, total variation, and uniform) and three network architectures (MLP, CNN, DeepONet). For the Gaussian prior, two covariance structures are considered. We also implement PINN with 
the physics-informed objective $\mathcal{R}_\alpha$ with $\alpha = 1$. The resulting neural network models are compared to MAP estimates under a Gaussian prior, and the prior sample mean 
$\bar{m} = \frac{1}{K}\sum_{i=1}^K m_i$. 

\subsection{Test Problems}
\textbf{1D Linear Wing Problem.} The Wing problem is a discretization of the Fredholm integral equation of the first kind
\begin{equation}
\int_0^1 K(s,t)\, m(t)\, dt = g(s),
\label{eq:wing_forward}
\end{equation}
where the kernel $K(s,t) = t\,e^{-st^2}$ and the exact right-hand side is $g(s) = (e^{-s/9} - e^{-4s/9})/(2s)$. Here $m(t)$ denotes the unknown parameter function, discretized as $\mathbf{m} \in \mathbb{R}^N$.
The discretized forward operator $\mathbf{G} \in \mathbb{R}^{M \times N}$ has
entries
\begin{equation}
G_{ij} = t_j \exp(-s_i t_j^2)\,\Delta t,
\label{eq:wing_G}
\end{equation}
where $t_j$ and $s_i$ are uniform grids on $[0,1]$ with $N = 50$ and $M = 20$ points respectively, and $\Delta t = t_2 - t_1$.
The true parameter vector $\mathbf{m}_{\rm true} \in \mathbb{R}^N$ is the discretization of the discontinuous rectangular pulse
\begin{equation}
m_{\rm true}(t) = \left\{ \begin{array}{ll} 
1, & \frac{1}{3} < t < \frac{2}{3}, \\ 
0, & \text{otherwise,} 
\end{array} \right.
\label{eq:wing_true}
\end{equation} 
and the observational noise is $\varepsilon \sim \mathcal{N}(0, C_d)$ with $\sigma_d = 0.01$.

\textbf{1D Nonlinear Subsurface Interface Inversion.}
This problem estimates a subsurface interface function $z(w)$,
representing the depth of a geological anomaly, from surface measurements of the potential field. Following \cite{tarantola2005inverse}, the forward operator
evaluates
\begin{equation}
d(x) = \int_a^b \log \frac{(x-w)^2 + H^2}{(x-w)^2 +
[H - z(w)]^2}\, dw,
\label{eq:anomaly_forward}
\end{equation}
where $H = 10$ km is a fixed depth parameter and the domain is
$[a,b] = [0, 100]$ km. The interface function $z(w)$ is discretized as
$\mathbf{m} \in \mathbb{R}^N$ on a uniform grid of $N = 100$ points, and
data are collected at $M = 15$ uniformly spaced surface locations.
The true parameter vector $\mathbf{m}_{\rm true}$ is the discretization of
the smooth Gaussian-shaped anomaly
\begin{equation}
z_{\rm true}(w) = z_{\max} \exp\!\left(-\frac{5(w - w_0)^2}{N}\right),
\label{eq:anomaly_true}
\end{equation}
with peak amplitude $z_{\max} = 2.5$ km at $w_0 = 50.5$ km,
and the observation noise is $\varepsilon \sim \mathcal{N}(0, C_d)$ with $\sigma_d = 0.1$.

\textbf{2D Nonlinear Cross-Well Seismic Traveltime Tomography.}
The third problem recovers the subsurface slowness field from 
cross-well seismic traveltimes. The unknown is the slowness 
field $m(\mathbf{x}) = 1/v(\mathbf{x})$, where $v(\mathbf{x})$ 
is the seismic velocity at point $\mathbf{x}$ in the medium. 
The traveltime from a source to a receiver is governed by the 
continuous line integral
\begin{equation}
    t = \int_L m(\mathbf{x})\, dL = \int_L 
    \frac{1}{v(\mathbf{x})}\, dL,
    \label{eq:seismic_continuous}
\end{equation}
where $L$ is the ray path connecting source and receiver and 
$dL$ is an infinitesimal length element along it. Discretizing 
the domain into $N$ cells and approximating the ray path 
through each cell gives the forward operator
\begin{equation}
    t_{ij} = \sum_{k=1}^{N} L_{ijk}\, m_k,
    \label{eq:seismic_forward}
\end{equation}
where $m_k = 1/v_k$ is the slowness in the $k$th grid cell, 
$v_k$ is the corresponding velocity, and $L_{ijk}$ is the 
ray-path length through the $k$th cell for source-receiver 
pair $(i,j)$, computed by iterative ray bending with up to 
$50$ iterations and convergence tolerance $10^{-4}$ 
\cite{aster2018parameter}.
The model domain is $1600\,\text{m} \times 1600\,\text{m}$, 
discretized on a $7 \times 7$ slowness grid giving $N = 49$. 
Seven sources and seven receivers are placed along the left 
and right boundaries respectively, yielding $M = 49$ 
observations. The true velocity model is
\begin{equation}
v_{\rm true}(\mathbf{x}) = v_0\left(1 + 0.10\, e^{-\gamma\|\mathbf{x} - 
\mathbf{x}_{\rm fast}\|^2}\right)\left(1 - 0.15\, e^{-\gamma\|\mathbf{x} - 
\mathbf{x}_{\rm slow}\|^2}\right),
\label{eq:vtrue}
\end{equation}
where $v_0 = 2900$ m/s, $\gamma = 4\times 10^{-5}$ m$^{-2}$, and 
$\mathbf{x}_{\rm fast}$, $\mathbf{x}_{\rm slow}$ are the centers of the 
fast and slow anomalies respectively. The true slowness vector $\mathbf{m}_{\rm true}$ is obtained as $\mathbf{m}_{\rm true} = 1/v_{\rm true}(\mathbf{x})$, and the 
observation noise is $\varepsilon \sim \mathcal{N}(0, C_d)$ 
with $\sigma_d = 0.001$ s.

\subsection{Prior Families}
We consider four prior families over $\mathbf{m} \in \mathbb{R}^N$, each centered at the prior mean $\mathbf{m}_0$ of the respective problem and calibrated to the same marginal standard deviation $\sigma$, so that they differ only in distributional shape. This isolates the effect of prior shape on the regularization character of the learned inverse operator, as established in Section~3. The four families are defined as follows. 

The Gaussian prior is
$\mathbf{m} \sim \mathcal{N}(\mathbf{m}_0, \mathbf{C}_m)$, where the
prior covariance is given by the Gaussian covariance model
\begin{equation}
(\mathbf{C}_m)_{jk} = \sigma^2 \exp\!\left\{-\frac{\|\mathbf{x}_j -
\mathbf{x}_k\|^2}{2\Delta^2}\right\},
\label{eq:cov}
\end{equation}
where $\mathbf{x}_j$ denotes the spatial location of the $j$th model
parameter and $\Delta$ is the correlation length. This prior induces
Tikhonov-type regularization and yields smooth solutions. For comparison,
the scaled identity covariance $\mathbf{C}_m = \sigma^2 \mathbf{I}$ is
also considered, encoding no spatial correlation structure. 

The Laplace prior draws each component independently as
$\mathbf{m} \sim \mathrm{Laplace}(\mathbf{m}_0, \mathbf{b})$, where
$b_j = \sqrt{(\mathbf{C}_m)_{jj}/2}$ matches the marginal variance
to that of the Gaussian prior, inducing $\ell_1$-type regularization
and favoring sparse deviations from $\mathbf{m}_0$.

The total variation prior is generated by accumulating independent
Laplace increments along the spatial coordinate,
$m_{j+1} - m_j \sim \mathrm{Laplace}(0, b_{\rm grad})$, centred at $\mathbf{m}_0$ and rescaled to match $\sigma$. This implements the
discrete total variation functional $\sum_j |m_{j+1} - m_j|$ as the implicit regularizer, favoring piecewise-constant solutions. For the 2D problem, independent Laplace walks
are accumulated along both spatial directions and summed, implementing
the anisotropic discrete total variation
$\sum_{i,j}(|m_{i+1,j} - m_{i,j}| + |m_{i,j+1} - m_{i,j}|)$.

The uniform prior draws each component independently from
$m_j \sim \mathrm{Uniform}(m_{0,j} - a,\; m_{0,j} + a)$,
where $a = \sigma\sqrt{3}$ matches
the marginal standard deviation to the other three families. Unlike the Gaussian, Laplace,
and total variation priors, the uniform prior encodes no spatial regularization
structure. Its components are drawn independently, and the distribution
places equal probability on all values within the interval $[m_{0,j} - a,\; m_{0,j} + a]$. It therefore isolates the regularization
contributed by the network architecture and training procedure from that
contributed by the distributional shape of the prior.

The parameters for each problem are given in Table~\ref{tab:params}.

\begin{table}[htbp]
\centering
\begin{tabular}{|lcccc|}
\hline
Problem & $\sigma$ & $\mathbf{m}_0$ & $\Delta$ & $K$ \\
\hline
Wing & $1$ & $\mathbf{0}$ & $0.02$ & $100{,}000$ \\
Subsurface interface & $1$ & $\mathbf{0}$ & $1$ km & $100{,}000$ \\
Seismic tomography &~~~~ $0.05/v_0$ & ~~~~$(1/v_0)\mathbf{1}$ & ~~$400$ m & $50{,}000$ \\
\hline
\end{tabular}
\caption{Parameters for each problem. For the Laplace, TV, and
uniform priors, $\Delta$ does not apply.}
\label{tab:params}
\end{table}

\subsection{Network Architectures}
Three network architectures are used as learned inverse operators
$G^\dagger_\theta \colon \mathbb{R}^M \to \mathbb{R}^N$:
a residual multilayer perceptron (MLP), a convolutional neural network
(CNN), and a deep operator network (DeepONet) with a Fourier trunk. The
three architectures impose distinct inductive biases on the learned
inverse mapping. The MLP uses fully connected layers with no explicit
spatial structure, allowing flexible global interactions between inputs
and outputs. The CNN exploits the ordering of the observation vector
through shared convolutional kernels of multiple widths, promoting
multiscale local feature extraction. The DeepONet separates the
dependence on observations and spatial coordinates through branch and
trunk networks; the Fourier-feature trunk provides a spectral
parameterization that captures smooth, low-frequency components
efficiently. Together, these architectures enable a systematic
assessment of how architectural regularization interacts with
prior-induced regularization across problems of varying complexity
and nonlinearity.

The residual MLP uses fully connected layers with hidden units
$(128, 256, 512, 256, 128)$. A residual skip connection projects
the output of the first hidden layer to match the dimension of the
final hidden layer and adds it to the final hidden representation
prior to the output layer.

The CNN applies parallel one-dimensional convolutional branches to
the ordered observation vector, with kernel sizes $(1, 3, 5, 7)$.
Each branch comprises two Conv1D layers with $64$ filters followed
by global average pooling. The branch outputs are concatenated and
passed through dense layers of sizes $(512, 256)$ prior to the
output layer. For the seismic tomography problem, two-dimensional
convolutions are used to account for the spatial structure of the
domain.

The DeepONet with a Fourier trunk separates the dependence of the
learned operator on observations and spatial coordinates through
branch and trunk networks. The branch network encodes
$\mathbf{d} \in \mathbb{R}^M$ into a latent representation
$\mathbf{b}(\mathbf{d}) \in \mathbb{R}^p$ via a fully connected
MLP with depth $4$ and hidden width $256$. The trunk network maps
each query point $\mathbf{x}$ to sinusoidal positional features
constructed from $n_{\rm freqs} = 16$ Fourier frequencies, giving
a feature dimension of $33$, processed by an MLP with depth $4$
and hidden width $256$ to produce $\mathbf{t}(\mathbf{x}) \in
\mathbb{R}^p$. The operator output is formed as
\begin{equation}
G^\dagger_\theta(\mathbf{d})(\mathbf{x}) =
\sum_{k=1}^p b_k(\mathbf{d})\, t_k(\mathbf{x}),
\label{eq:deeponet}
\end{equation}
with basis dimension $p = 128$. Four branch and trunk combinations
were evaluated on the Wing and subsurface interface problems: an MLP
branch with an MLP trunk, an MLP branch with a Fourier trunk, a CNN
branch with an MLP trunk, and a CNN branch with a Fourier trunk. The
MLP branch with Fourier trunk combination produced the best
reconstruction quality and is used throughout all three problems for
consistency.

All three architectures use GELU activations, batch normalization, $L_2$ weight regularization with strength $\lambda = 10^{-4}$, dropout with rate $0.2$, and a softplus output activation for the Wing and subsurface interface problems to ensure $\hat{\mathbf{m}} \geq \mathbf{0}$. Training pairs are generated by drawing $\mathbf{m}_i \sim \mu$ and computing $\mathbf{d}_i =
G(\mathbf{m}_i) + \mbox{\boldmath$\varepsilon$} _i$,
$\mbox{\boldmath$\varepsilon$}_i \sim \mathcal{N}(\mathbf{0}, C_d)$, with $\mathbf{m}_i$ and $\mbox{\boldmath$\varepsilon$}_i$ independent.

All networks minimize the empirical risk $J_K(\theta)$ defined in Section~3. Physics-informed networks (PINNs) minimize the augmented objective $\mathcal{R}_\alpha$ with $\alpha = 1$, balancing the physics residual against prior-induced and architectural regularization. Networks are optimized with the Adaptive Moment Estimation with Weight Decay (AdamW) optimizer with learning rate $10^{-3}$, weight decay $10^{-4}$, and gradient clipping norm $1.0$. Training uses an $85\%/15\%$ train-validation split, batch size $256$, and a maximum of $1000$ epochs. Early stopping with patience $100$ and learning rate reduction on plateau with factor $0.5$ and patience $15$ are applied throughout. Input and output vectors are standardized to zero mean and unit variance.

\subsection{Baseline Methods and Evaluation Metric}

The MAP estimate under a Gaussian prior serves as the classical
regularization baseline throughout. For the Wing problem, where
$G$ is linear, the MAP estimate is computed in closed form as
established in Section~2. For the subsurface interface and seismic
tomography problems, where $G(\mathbf{m})$ is nonlinear, the MAP
estimate is approximated by the Gauss-Newton iteration initialized
at $\mathbf{m}^{(0)} = \mathbf{m}_0$ and run for $10$ steps. A
separate baseline is computed under each covariance structure. The
prior sample mean $\bar{\mathbf{m}} = \frac{1}{K}\sum_{i=1}^K
\mathbf{m}_i$ is included as a second baseline to confirm that the
learned operator extracts meaningful information from
$\mathbf{d}_{\rm obs}$ rather than collapsing to the prior mean.

The evaluation metric for all experiments is the relative $\ell_2$
error
\begin{equation}
e_{\rm rel} = \frac{\|\hat{\mathbf{m}} - \mathbf{m}_{\rm true}\|_2}
{\|\mathbf{m}_{\rm true}\|_2},
\label{eq:rel_l2}
\end{equation}
where $\hat{\mathbf{m}}$ is the network prediction or baseline
estimate. For the Wing and subsurface interface problems, $e_{\rm
rel}$ is computed in parameter space. For the seismic tomography
problem, $e_{\rm rel}$ is computed in velocity space by substituting
$\hat{v} = 1/\hat{m}$ and $v_{\rm true} = 1/m_{\rm true}$, since
velocity is the physically interpretable quantity in seismic
tomography.


\subsection{Wing Problem}
\label{sec:wing}
Results are obtained under the correlated covariance with $\Delta = 0.02$, unless otherwise stated.

Figure~\ref{fig:wing_best} shows the best-performing architecture reconstruction $\hat{\mathbf{m}} = G^\dagger_\theta(\mathbf{d}_{\rm obs})$ per prior family, trained by minimizing the standard empirical risk $J_K(\theta)$. The sample mean is approximately zero, consistent with all four priors
being centered at $\mathbf{m}_0 = \mathbf{0}$, yet all learned operators recover the pulse location and a positive amplitude, consistent with convergence to $E_\mu[\mathbf{m} \mid \mathbf{d}]$ established in Theorem~1. Under the Gaussian, Laplace, and uniform
priors, the best-performing architectures produce smooth
reconstructions centered near the true pulse region with nearly indistinguishable results. The total variation prior, by contrast, yields substantially sharper transitions and a higher peak amplitude.

Only the TV prior meaningfully recovers the defining features of $\mathbf{m}_{\rm true}$, namely the pulse location, the discontinuous transitions at $t = 1/3$ and $t = 2/3$, and the near-zero background outside the pulse. The TV CNN recovers a peak amplitude of approximately $1.15$ and begins to resolve the boundary
transitions, though they remain more diffuse than the true discontinuities due to the smoothing action of $G$. The Gaussian, Laplace, and uniform priors produce reconstructions with peak amplitudes reduced to approximately $55$--$60\%$ of the
true value, with transitions entirely unresolved and results indistinguishable from the MAP estimate ($e_{\rm rel} = 0.631$).

\begin{figure}[htbp]
\centering
\includegraphics[width=\textwidth]{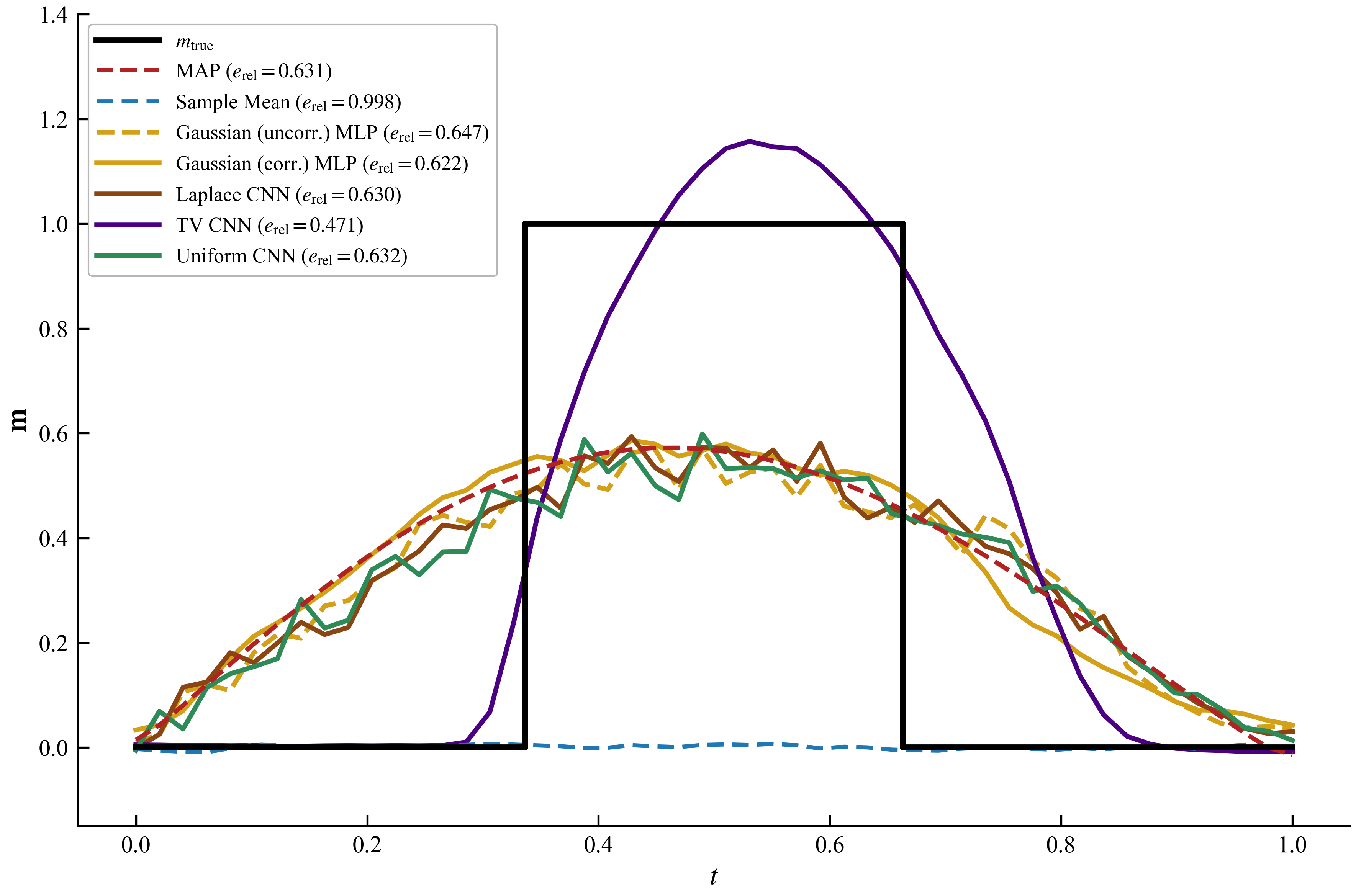}
\caption{Wing problem: best-performing architecture per prior family
under the correlated covariance (lowest $e_{\rm rel}$), with the
Gaussian prior repeated under the uncorrelated covariance for
comparison. The true solution $\mathbf{m}_{\rm true}$ (solid black), MAP estimate $\hat{\mathbf{m}}_{\rm MAP}$ (grey dashed) and prior sample mean $\bar{\mathbf{m}}$ (blue dashed) are included for
comparison. Relative $\ell_2$ errors are reported in
the legend.}
\label{fig:wing_best}
\end{figure}

The near-equivalence of the Gaussian, Laplace, and uniform reconstructions indicates that architectural regularization dominates when the prior distributional shape is poorly matched to $\mathbf m _{\rm true} $. The uniform prior makes this explicit: since it encodes no spatial structure, any regularization in its reconstruction must arise entirely from the network and training procedure, yet it achieves $e_{\rm rel} = 0.632$, indistinguishable from the Gaussian ($0.622$) and Laplace ($0.630$) priors. The TV prior is the only family whose distributional shape competes with this architectural bias. By promoting sparsity of discrete
gradients $|m_{j+1} - m_j|$, it concentrates probability mass on piecewise-constant functions and biases $E_\mu[\mathbf{m} \mid \mathbf{d}]$ toward sharper transitions, partially recovering the discontinuous structure of $\mathbf{m}_{\rm true}$ even where the data provide limited constraint.

For the Gaussian prior, the uncorrelated covariance has $e_{\rm rel} = 0.647$, slightly worse than its correlated counterpart at $0.622$, with no qualitative change in the reconstruction. The modest improvement under the correlated covariance derives from the additional structural information it encodes rather than from any difference in marginal variance.

These results confirm that the learning framework provides no advantage over the classical baseline when the prior is poorly matched to $\mathbf{m}_{\rm true}$, and that prior-structure alignment is the dominant factor of reconstruction quality.

Figure~\ref{fig:wing_pinn} compares the best-performing NN and PINN reconstruction per prior family, obtained by minimizing $J_K(\theta)$ and $\mathcal{R}_\alpha$ with $\alpha = 1$
respectively.

Under the Gaussian, Laplace, and uniform priors, the physics-informed objective produces reconstructions effectively unchanged from those obtained under the standard empirical risk, with no improvement in peak amplitude or boundary resolution. The physics residual enforces data consistency that the architectural regularization has effectively already achieved, producing no meaningful change in the learned operator. Under the TV prior, $e_{\rm rel}$ decreases from $0.471$ to $0.448$, with a corresponding sharpening of the predicted transitions near $t = 1/3$ and $t = 2/3$. 

The contribution of $\mathcal{R}_\alpha$ is therefore determined by the alignment between $\mu$ and the structure of $\mathbf{m}_{\rm true}$. The augmented objective acts as a refinement when the prior is well matched, but yields no meaningful benefit when it is mismatched.

\begin{figure}[htbp]
\centering
\includegraphics[width=\textwidth]{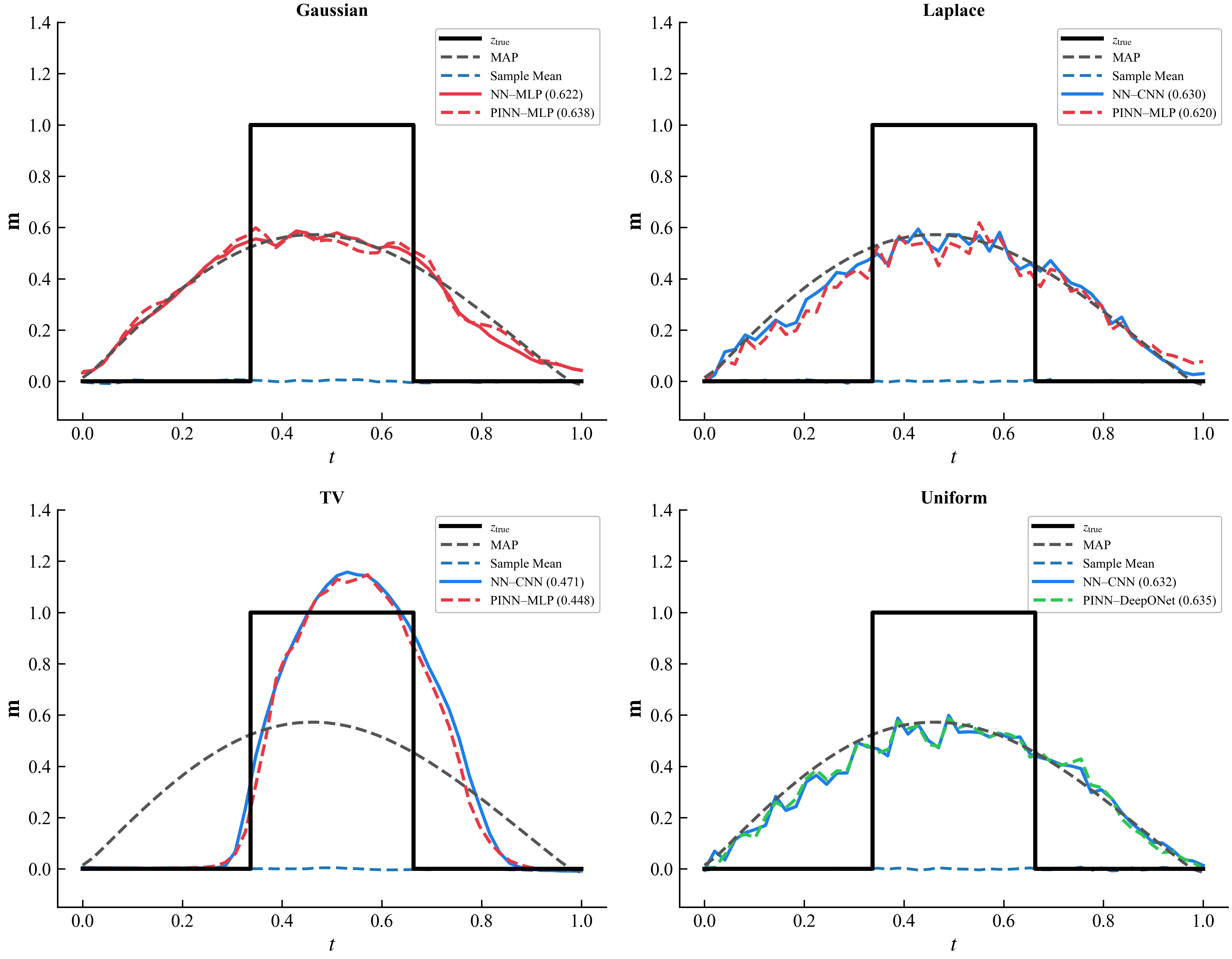}
\caption{Wing problem: best-performing NN (solid curves, lowest
$e_{\rm rel}$) and best-performing PINN (dashed curves, lowest
$e_{\rm rel}$) per prior family, obtained by minimizing $J_K(\theta)$
and $\mathcal{R}_\alpha$ with $\alpha = 1$ respectively. The true
solution $\mathbf{m}_{\rm true}$ (solid black), MAP estimate
$\hat{\mathbf{m}}_{\rm MAP}$ (grey dashed), and prior sample mean
$\bar{\mathbf{m}}$ (brown dashed) are included for comparison.
Relative $\ell_2$ errors are reported in the panel legends.}
\label{fig:wing_pinn}
\end{figure}

Table~\ref{tab:wing_metrics} summarizes the relative $\ell_2$ errors for the best-performing architectures under each prior and objective.

\begin{table}[htbp]
\centering
\begin{tabular}{|lcc|}
\hline
Prior & Best NN $e_{\rm rel}$ & Best PINN $e_{\rm rel}$ \\
\hline
Gaussian (correlated)   & $0.622$ (MLP)      & $0.638$ (MLP)      \\
Gaussian (uncorrelated) & $0.647$ (MLP)      & $0.645$ (DeepONet) \\
Laplace                 & $0.630$ (CNN)      & $0.620$ (MLP)      \\
TV                      & $0.471$ (CNN)      & $0.448$ (MLP)      \\
Uniform                 & $0.632$ (CNN)      & $0.635$ (DeepONet) \\
\hline
\end{tabular}
\caption{Relative $\ell_2$ errors $e_{\rm rel}$ for the Wing
problem. The architecture with the lowest $e_{\rm rel}$ is shown
in parentheses. The MAP estimate yields $e_{\rm rel} = 0.631$.}
\label{tab:wing_metrics}
\end{table}

\subsection{Subsurface Interface Inversion Results}
\label{sec:anomaly_results}
Results are obtained under the correlated covariance with $\Delta = 1$ km, unless otherwise stated.

Figure~\ref{fig:anomaly_best} shows the best-performing architecture reconstruction $\hat{\mathbf{m}} = G^\dagger_\theta(\mathbf{d}_{\rm obs})$ per prior family, trained by minimizing $J_K(\theta)$. As in the Wing problem, all learned operators recover the anomaly location and a
positive amplitude rather than collapsing to the prior mean. All reconstructions underestimate the true peak value $z_{\max} = 2.5$ km, yet each prior improves over the Gauss-Newton baseline ($e_{\rm rel} = 0.353$), with the Gaussian prior achieving the largest reduction ($e_{\rm rel} = 0.187$). This stands in contrast to the Wing problem,
where the learned operators under the Gaussian, Laplace, and uniform priors were indistinguishable from the MAP estimate.

\begin{figure}[htbp]
    \centering
    \begin{subfigure}{0.72\textwidth}
        \centering
        \includegraphics[width=\textwidth]{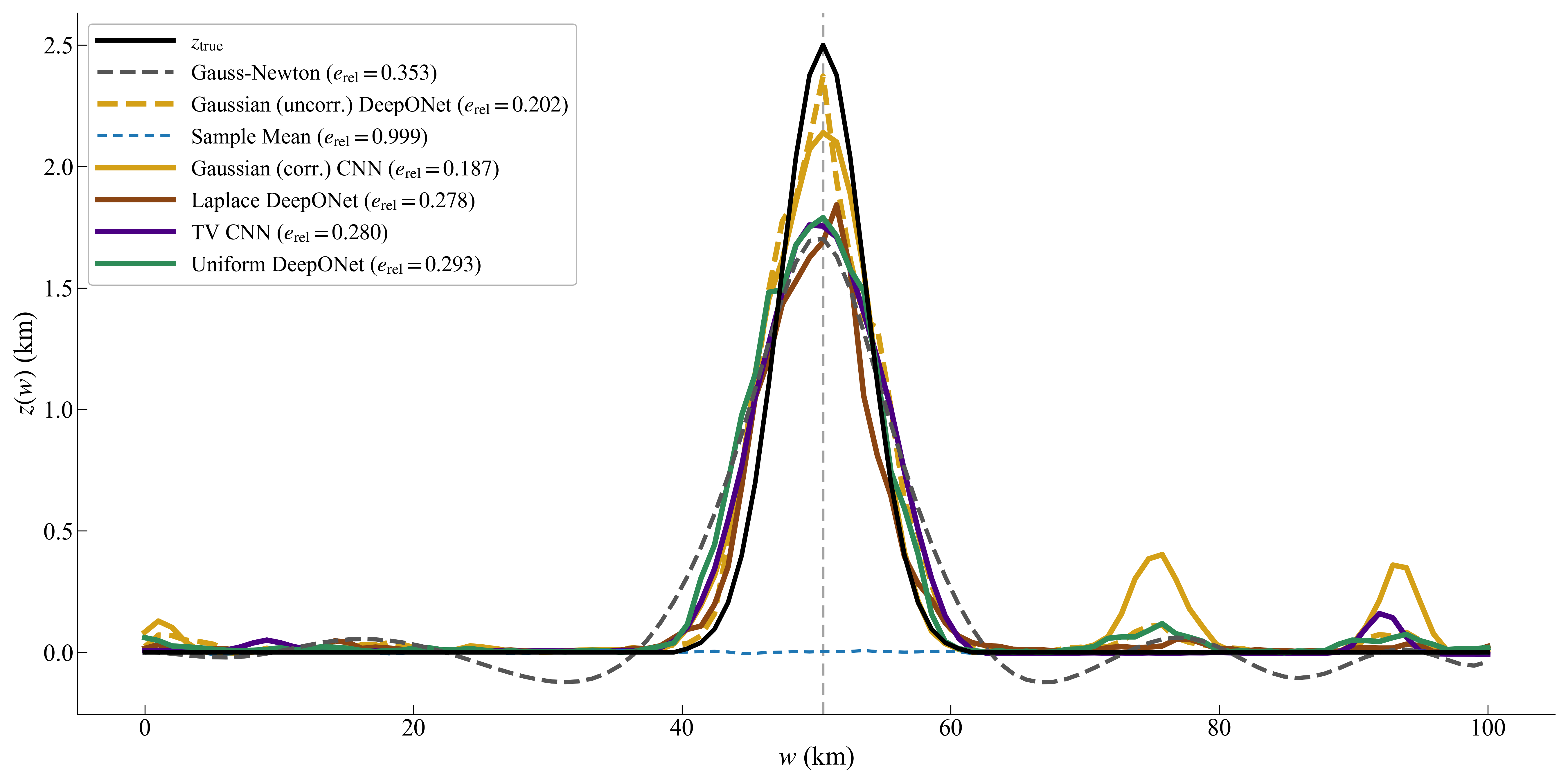}
        \caption{Full reconstruction over $w \in [0, 100]$ km.}
        \label{fig:anomaly_best_full}
    \end{subfigure}
    \hfill
    \begin{subfigure}{0.26\textwidth}
        \centering
        \includegraphics[width=\textwidth]{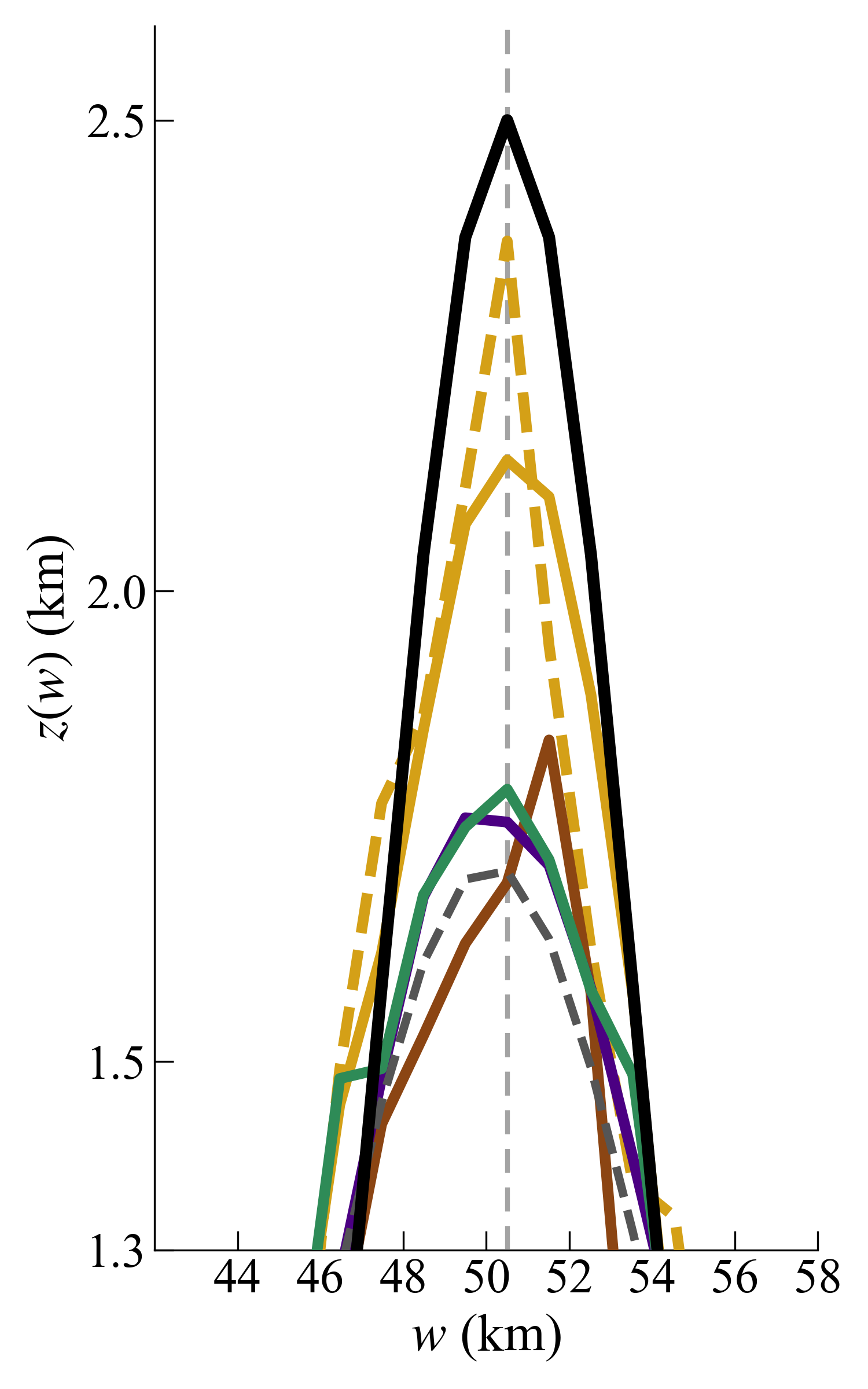}
        \caption{Zoomed view of the peak region
        $w \in [42, 58]$ km.}
        \label{fig:anomaly_best_zoom}
    \end{subfigure}
    \caption{Subsurface interface inversion: best-performing
architecture per prior family under the correlated covariance
(lowest $e_{\rm rel}$), with the Gaussian prior repeated under
the uncorrelated covariance (dashed gold) for comparison. The
true interface $z_{\rm true}$ (solid black), Gauss-Newton
estimate (grey dashed, $e_{\rm rel} = 0.353$), and prior sample
mean (blue dashed) are included for
comparison. The vertical dashed line marks the true peak
location $w_0 = 50.5$ km. Relative $\ell_2$ errors are
reported in the legend of panel~(a).}
    \label{fig:anomaly_best}
\end{figure}

The Gaussian prior correctly places the peak at $w_0 = 50.5$ km and recovers the highest peak amplitude ($\approx 2.1$ km), closely matching the sharp peak of $z_{\rm true}$, though oscillations around zero appear over $w \in [60, 100]$ km. The Laplace prior places the peak at $w \approx 52$ km, a shift of approximately $1.5$ km to the right of $w_0$, with a peak amplitude ($\approx 1.85$ km) and a clean reconstruction outside the anomaly region. The TV prior places the peak near $w_0 = 50.5$ km but produces a broad smooth reconstruction with a peak amplitude ($\approx 1.75$ km). The uniform prior also places the peak at $w_0 = 50.5$ km with a peak amplitude ($\approx 1.75$ km) and
only minor deviations outside the anomaly region.

The Gaussian prior produces the most accurate reconstruction, consistent with its smooth unimodal distributional shape matching that of $z_{\rm true}$. The Laplace prior recovers a competitive peak amplitude but with a shifted peak location, reflecting its $\ell_1$-type regularization. The TV prior produces a broad reconstruction, reflecting the mismatch between its piecewise-constant regularization
character and the smooth unimodal structure of $z_{\rm true}$. The uniform prior, which encodes no distributional shape, still improves over the Gauss-Newton baseline ($e_{\rm rel} = 0.293$), confirming that architectural regularization contributes to reconstruction quality
independent of prior distributional shape. Unlike the Wing problem, the choice of prior family has a measurable effect on reconstruction quality, indicating that prior-structure alignment plays a more substantial role for this nonlinear problem.

Figure~\ref{fig:anomaly_cov}(a) shows the effect of covariance
structure on reconstruction quality for the Gaussian prior. Under the correlated covariance, all three architectures recover the anomaly at $w_0 = 50.5$ km with a narrow peak profile. CNN attains the lowest error ($e_{\rm rel} = 0.187$), followed by DeepONet ($0.202$) and MLP ($0.242$), all improving over the Gauss-Newton baseline ($e_{\rm rel}= 0.353$). For the uncorrelated covariance, only DeepONet recovers the anomaly with a smooth peaked profile ($e_{\rm rel} = 0.202$), while MLP and CNN produce broad distorted reconstructions ($e_{\rm rel} = 0.608$ and $0.423$ respectively), both worse than the Gauss-Newton baseline ($e_{\rm rel} = 0.446$). These results confirm that spatial correlation in the prior covariance is necessary for MLP and CNN to recover the anomaly accurately. DeepONet, through its Fourier trunk, provides sufficient spectral inductive bias to reconstruct the smooth Gaussian-shaped anomaly without spatial correlation in the prior covariance.

\begin{figure}[htbp]
    \centering
    \begin{subfigure}{\textwidth}
        \centering
        \includegraphics[width=\textwidth]{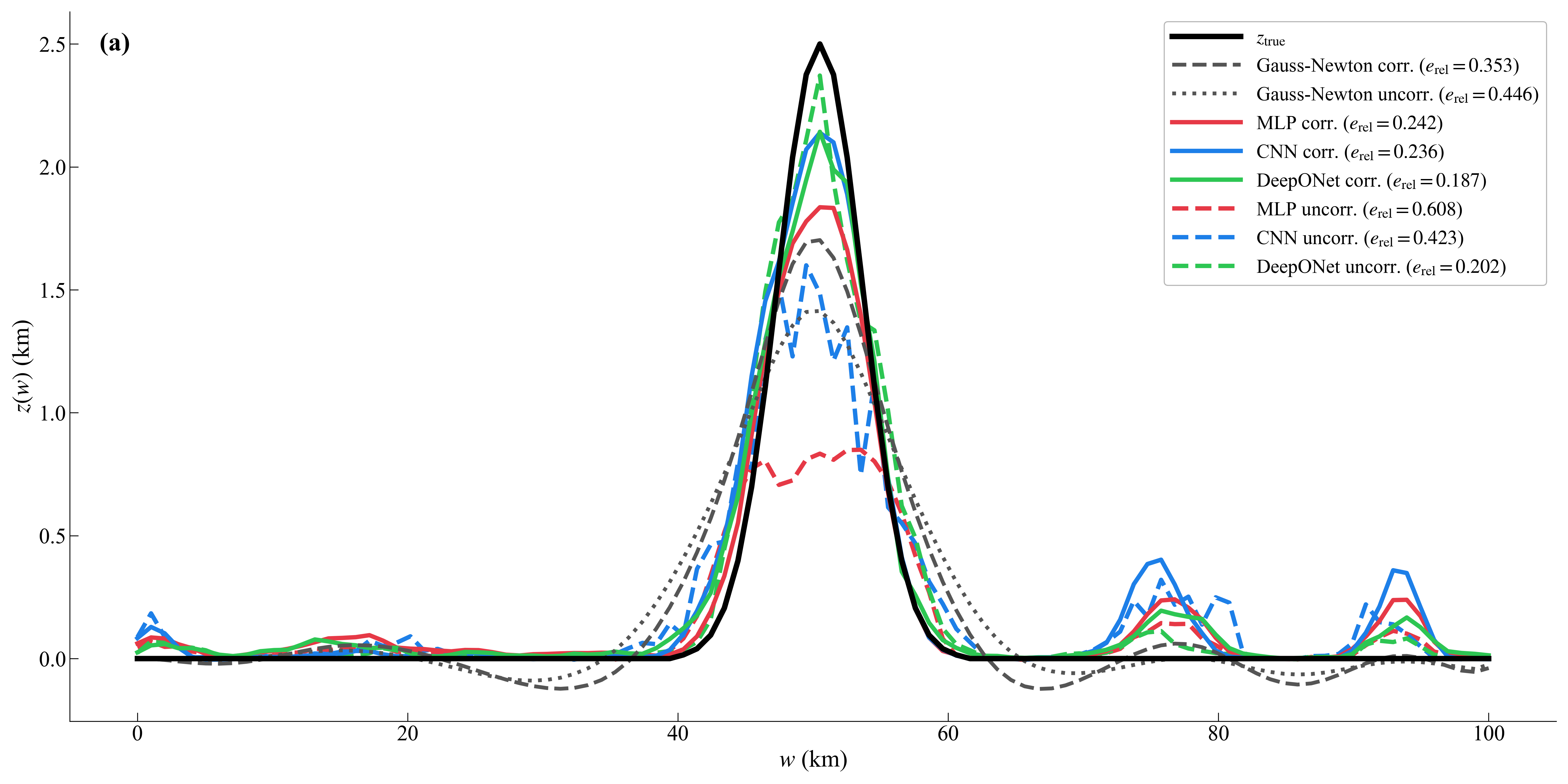}
        \caption{NN reconstructions for all three
        architectures under the correlated covariance
        (solid) and scaled identity covariance (dashed).}
        \label{fig:anomaly_cov_nn}
    \end{subfigure}
    \vspace{1em}
    \begin{subfigure}{\textwidth}
        \centering
        \includegraphics[width=\textwidth]{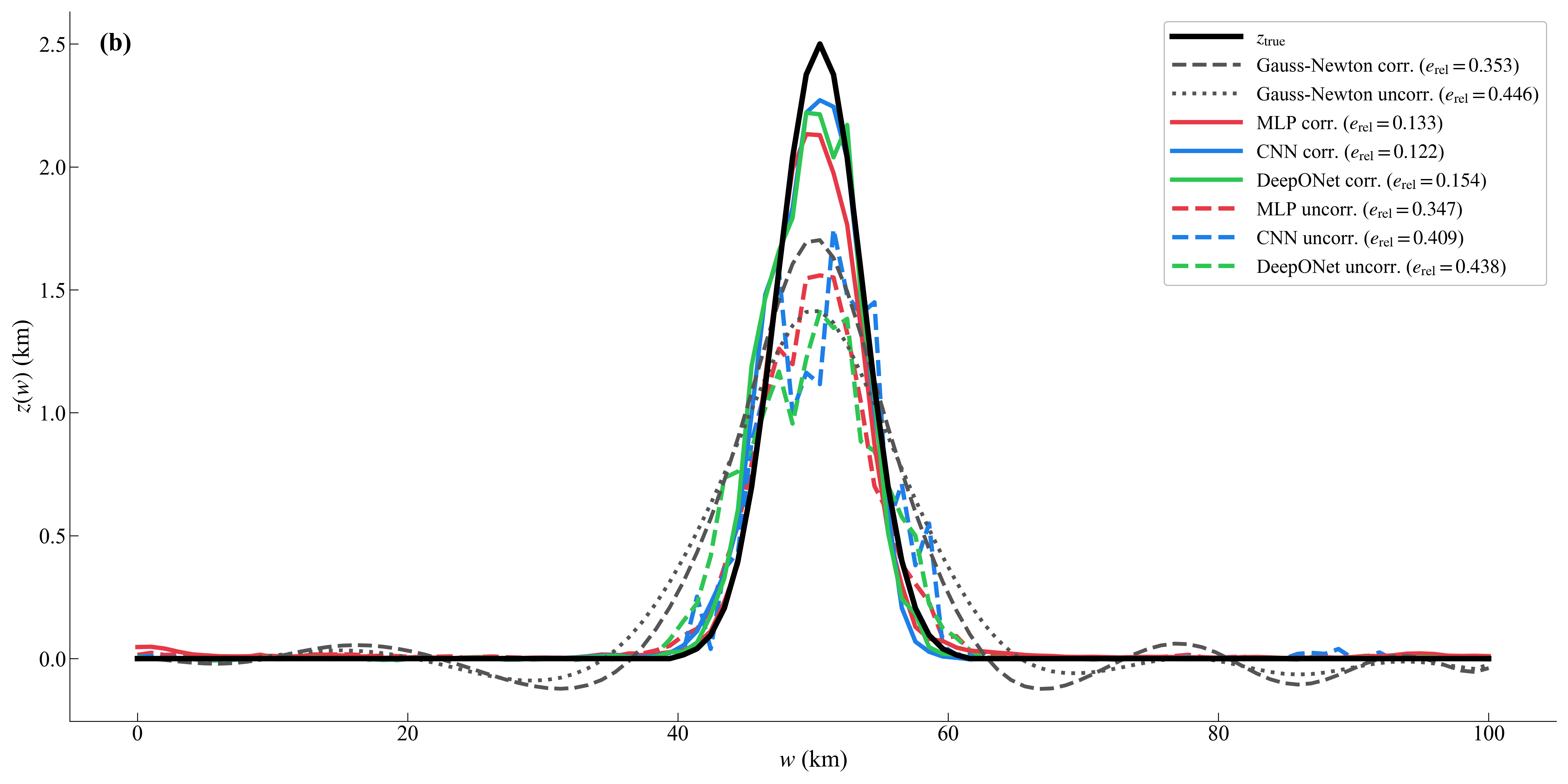}
        \caption{PINN reconstructions for all three
        architectures under the correlated covariance
        (solid) and scaled identity covariance (dashed).}
        \label{fig:anomaly_cov_pinn}
    \end{subfigure}
    \caption{Subsurface interface inversion: effect of
covariance structure on reconstruction quality for the
Gaussian prior. The true interface $z_{\rm true}$ (solid
black) and Gauss-Newton estimates under the correlated
covariance (grey dashed, $e_{\rm rel} = 0.353$) and scaled
identity covariance (grey dotted, $e_{\rm rel} = 0.446$)
are included for comparison. Relative $\ell_2$ errors are
reported in the legend.}
    \label{fig:anomaly_cov}
\end{figure}

Figure~\ref{fig:anomaly_pinn} compares the best-performing NN and PINN reconstruction per prior family, obtained by minimizing $J_K(\theta)$ and $\mathcal{R}_\alpha$ with $\alpha = 1$. Across all priors, the physics-informed objective suppresses oscillations outside the anomaly
region and produces cleaner reconstructions. The magnitude of improvement depends on the alignment between $\mu$ and the structure of $z_{\rm true}$. For the Gaussian prior, the PINN-CNN attains a higher peak amplitude and removes oscillations present in the NN reconstruction ($e_{\rm rel} = 0.122$ vs $0.187$). The Laplace prior shows a similar effect, with the PINN correcting the peak shift and increasing amplitude ($e_{\rm
rel} = 0.205$ vs $0.278$). For the TV prior, the improvement is modest, with slight sharpening of the peak and further suppression of background oscillations ($e_{\rm rel} = 0.245$ vs $0.280$). The uniform prior shows no improvement ($e_{\rm rel} = 0.350$ vs $0.293$), indicating that when prior-induced regularization is absent, the physics residual alone is insufficient to produce an accurate reconstruction.

\begin{figure}[htbp]
    \centering
    \includegraphics[width=\textwidth]{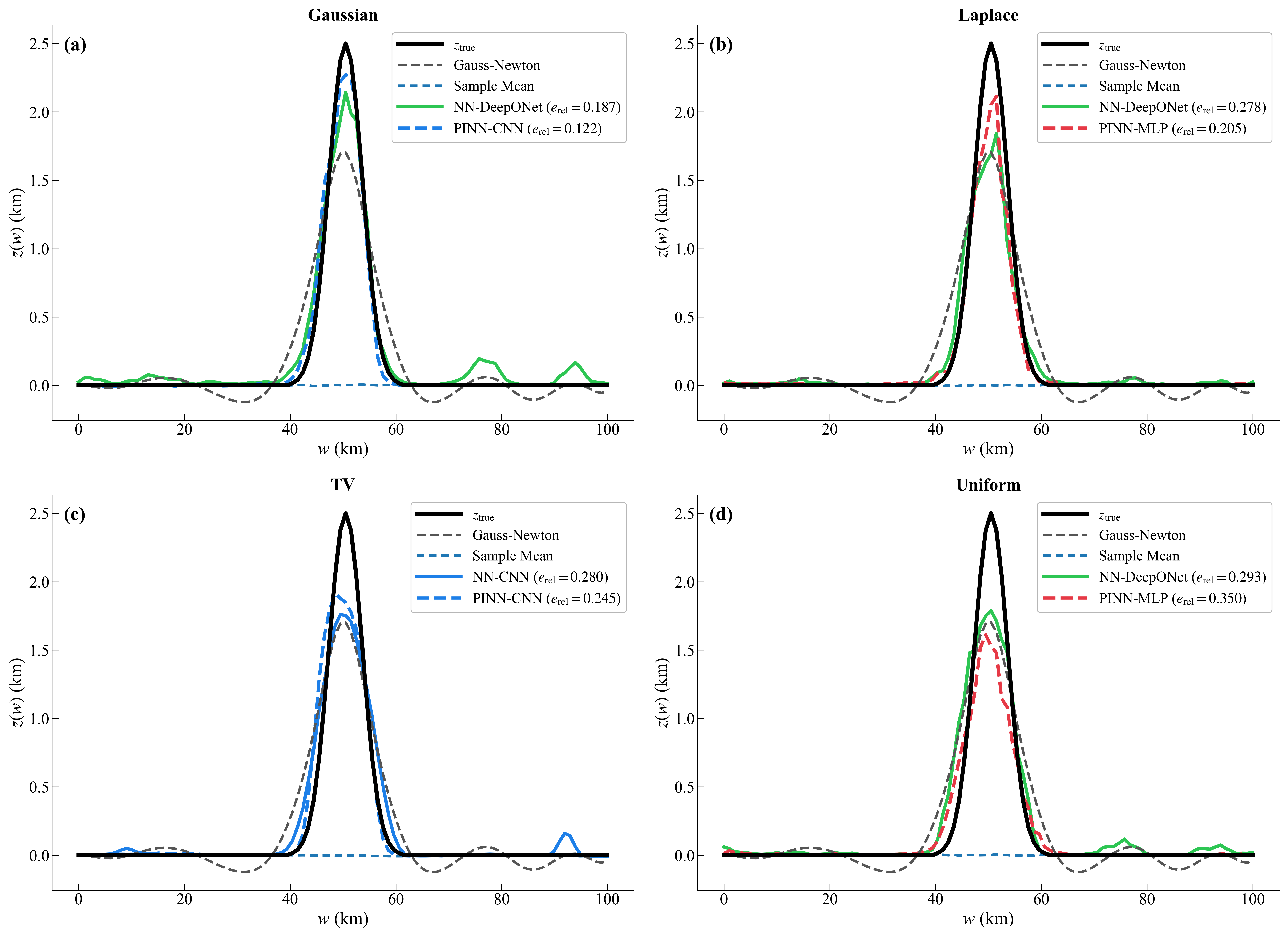}
    \caption{Subsurface interface inversion: best-performing NN (solid curves, lowest $e_{\rm rel}$) and best-performing PINN (dashed curves, lowest $e_{\rm rel}$) per prior family. The true interface $z_{\rm true}$ (solid black), Gauss-Newton estimate (grey dashed), and prior sample mean (brown dashed) are included for comparison. Relative $\ell_2$ errors are reported in the legend of each panel.}
    \label{fig:anomaly_pinn}
\end{figure}

Figure~\ref{fig:anomaly_cov}(b) shows that the PINN is more sensitive to covariance structure than the NN. Under the uncorrelated covariance, all three architectures fail to recover the peak accurately, with $e_{\rm rel}$ values of $0.347$, $0.409$, and $0.438$ for MLP, CNN, and DeepONet respectively, all worse than the Gauss-Newton baseline
($e_{\rm rel} = 0.446$). These results confirm that the contribution of $\mathcal{R}_\alpha$ is determined by prior-structure alignment. The augmented objective acts as a refinement when the prior is well matched to $z_{\rm true}$, but yields no meaningful benefit when it is mismatched.


\subsection{Cross-Well Seismic Traveltime Tomography}
\label{sec:tomo}

Results are obtained under the correlated covariance with
$\Delta = 400$ m, unless otherwise stated.

Figure~\ref{fig:tomo_true_gn} shows the true velocity field alongside the Gauss-Newton reconstructions under both covariance structures. Under the correlated covariance, the slow anomaly
is recovered as a broad diffuse region in the upper left, while the fast anomaly in the lower right is only weakly resolved ($e_{\rm rel} = 0.027$). The scaled identity covariance resolves both anomalies more distinctly, with a more compact slow anomaly
and a clearly visible fast anomaly ($e_{\rm rel} = 0.016$), though an additional bright feature appears near the slow anomaly that is not present in $v_{\rm true}$.

\begin{figure}[htbp]
\centering
\includegraphics[width=\textwidth]{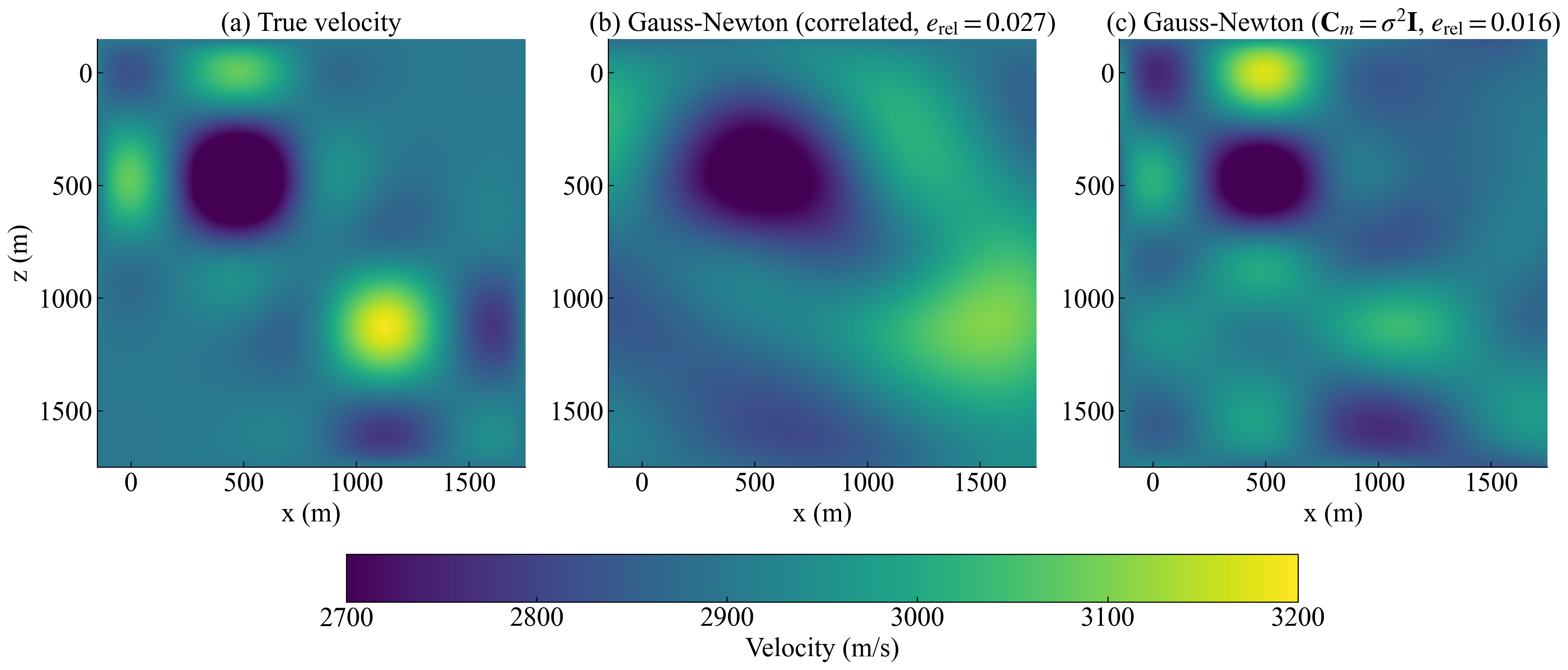}
\caption{Cross-well seismic tomography: (a) true velocity field and Gauss-Newton reconstructions under (b) the correlated covariance ($\Delta = 400$ m, $e_{\rm rel} = 0.027$) and (c) the scaled identity covariance ($C_m = \sigma^2 \mathbf{I}$, $e_{\rm rel} = 0.016$).}
\label{fig:tomo_true_gn}
\end{figure}

Figure~\ref{fig:tomo_best} shows the best-performing architecture reconstruction $\hat{\mathbf{m}} = G^\dagger_\theta(\mathbf{d}_{\rm
obs})$ per prior family, trained by minimizing $J_K(\theta)$. Across all prior families, the learned operators recover the location of the slow anomaly in the upper left, with reconstruction quality varying substantially across prior families.

\begin{figure}[htbp]
\centering
\includegraphics[width=\textwidth]{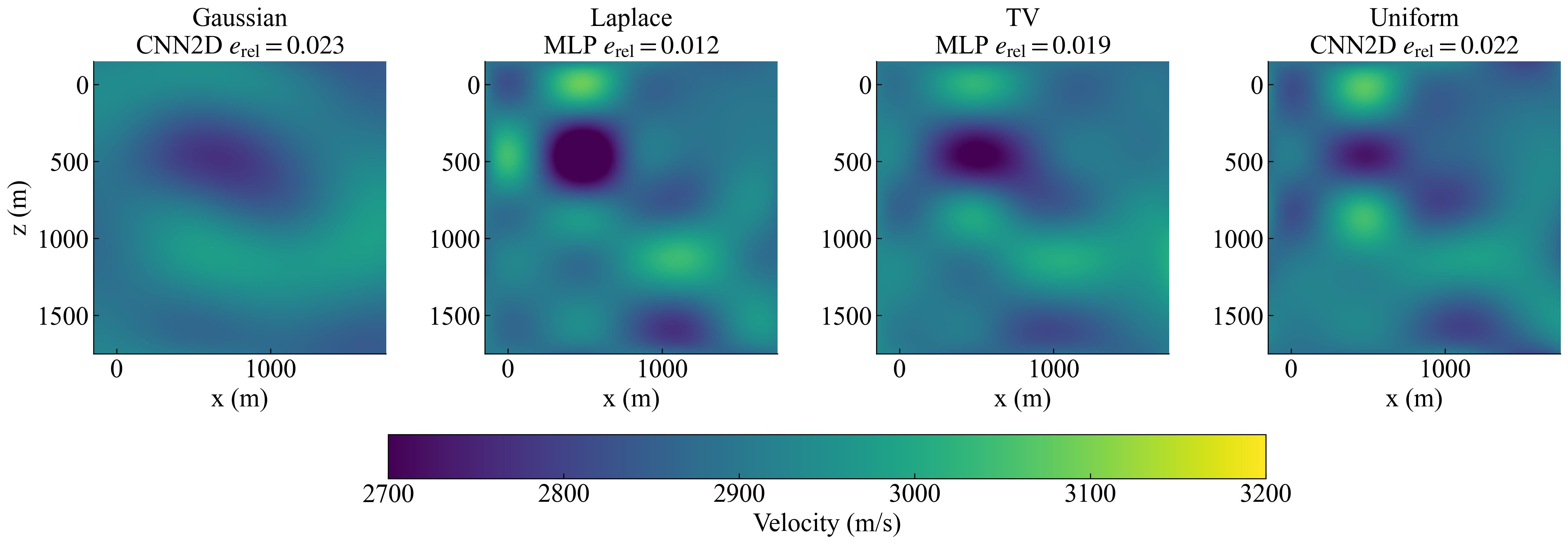}
\caption{Cross-well seismic tomography: best-performing architecture per prior family (lowest $e_{\rm rel}$) under the correlated covariance ($\Delta = 400$ m).}
\label{fig:tomo_best}
\end{figure}

The Laplace prior produces the sharpest anomaly boundaries and highest contrast among the four families, with the slow anomaly appearing as a compact well-localized feature at the
correct position and the fast anomaly clearly resolved ($e_{\rm rel} = 0.012$). The TV prior recovers both anomalies with moderate contrast and moderately sharp boundaries ($e_{\rm rel} = 0.019$), with the slow anomaly more spatially
diffuse than in the Laplace case and the fast anomaly only weakly resolved. The Gaussian prior recovers the slow anomaly as a broad diffuse region but fails to resolve the fast anomaly ($e_{\rm rel} = 0.023$). The uniform prior recovers both anomaly locations with intermediate contrast ($e_{\rm
rel} = 0.022$), but exhibits additional fast and slow features not present in $v_{\rm true}$.

The improved reconstruction quality of the Laplace prior is consistent with its $\ell_1$-type regularization promoting sparse deviations from $\mathbf{m}_0$. The true anomalies
are spatially compact perturbations against a uniform background, consistent with the localized deviations from $\mathbf{m}_0$ induced by the Laplace prior, and this prior-structure alignment accounts for its improved reconstruction quality relative to the other three families. The TV prior yields an intermediate reconstruction reflecting
its piecewise-constant regularization character, which promotes sharp transitions in the gradient field but does not favor the smooth Gaussian spatial extent of the true anomalies. The uniform prior, which encodes no spatial structure, recovers both anomaly locations but introduces additional features not present in $v_{\rm true}$. The
Laplace, TV, and uniform priors all improve over the
Gauss-Newton baseline ($e_{\rm rel} = 0.027$), while the Gaussian prior produces a reconstruction qualitatively similar to Gauss-Newton ($e_{\rm rel} = 0.023$), consistent with the smooth solutions induced by both the Gaussian prior and the correlated covariance, which together suppress the
localized high-contrast features needed to resolve the true anomalies.

Figure~\ref{fig:tomo_cov} examines the effect of covariance structure on reconstruction quality for the Gaussian prior. Under the scaled identity covariance, both the learned operator and the Gauss-Newton estimate produce more localized reconstructions, with a more compact slow anomaly and a clearly resolved fast anomaly. The best NN reconstruction attains $e_{\rm rel} = 0.016$, matching the Gauss-Newton baseline under the same covariance. For the Laplace, TV, and uniform priors, only the diagonal of $C_m$ enters the
construction of the component-wise scale parameters, so both covariance choices produce nearly identical reconstructions. Even under the scaled identity covariance, where the Gaussian prior reconstruction improves substantially, it does not match the reconstruction quality
achieved by the Laplace prior under either covariance structure, confirming that prior distributional shape is the dominant factor of reconstruction quality.

The physics-informed objective $\mathcal{R}_\alpha$ with $\alpha = 1$ produces reconstructions nearly indistinguishable from those obtained under $J_K(\theta)$ across all four prior families, with differences in $e_{\rm rel}$ of at most $0.001$. This indicates that the regularization induced by the training prior dominates 
in this setting, and the physics residual provides only a marginal additional 
correction, consistent with the pattern established in the Wing and subsurface 
interface problems, where the physics-informed objective acts as a refinement 
when the prior is well matched to the true model, but yields no meaningful 
benefit when it is mismatched.

\begin{figure}[htbp]
\centering
\includegraphics[width=\textwidth]{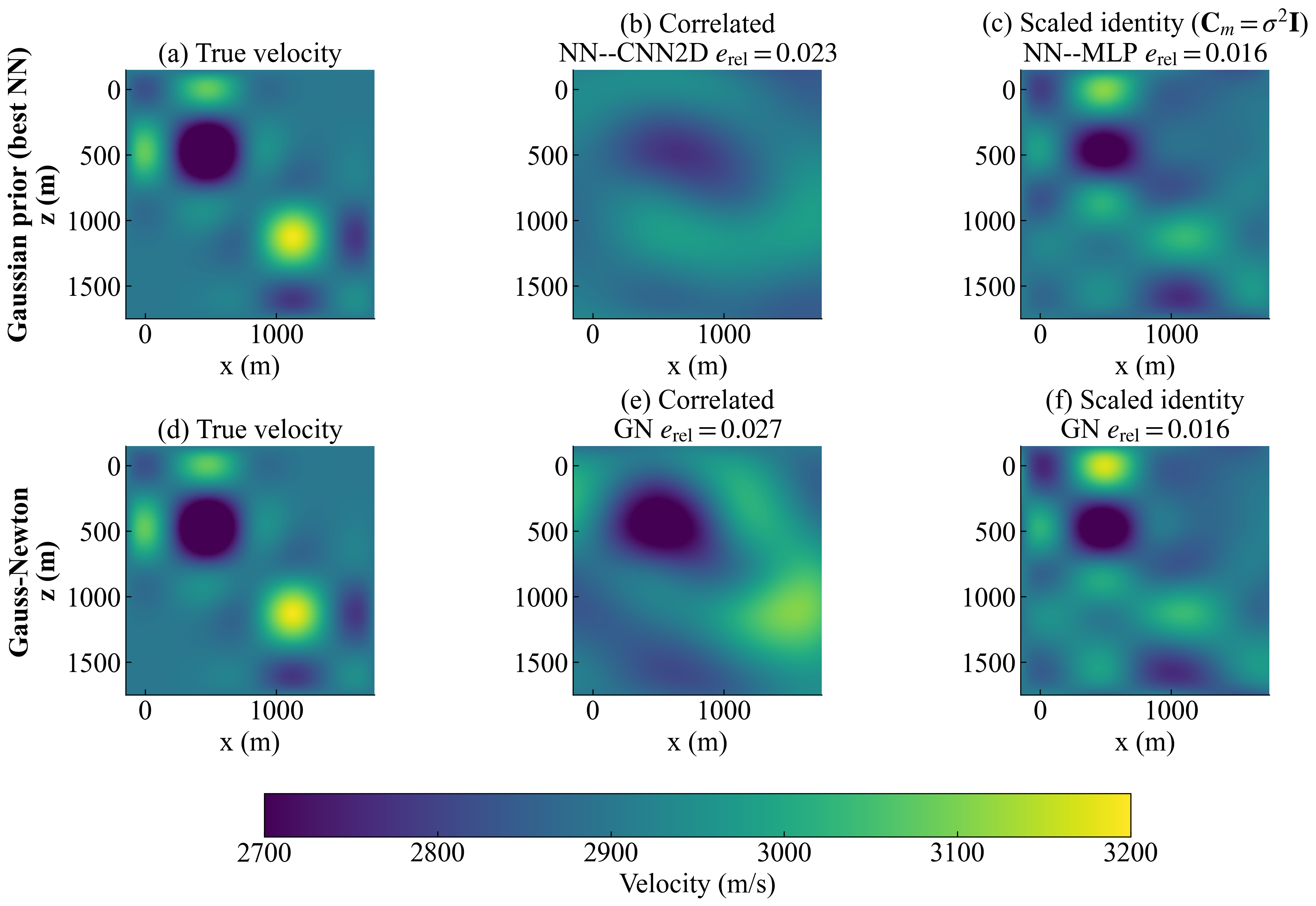}
\caption{Cross-well seismic tomography: effect of covariance structure on reconstruction quality for the Gaussian prior. Top row: best-performing NN reconstruction (lowest $e_{\rm
rel}$) under the correlated covariance and scaled identity covariance. Bottom row: Gauss-Newton reconstruction under both covariance structures. Panels~(a) and~(d) show the true velocity field for orientation. Relative $\ell_2$ errors are reported in each panel title.}
\label{fig:tomo_cov}
\end{figure}

\section{Discussion and Conclusion}
\label{discussion}

The experiments across three inverse problems of increasing complexity reveal a consistent pattern governed by three distinct sources of regularization in $G^\dagger_\theta$. The first is prior-induced regularization, determined by the distributional shape of the sampling measure $\mu$. The second is architectural regularization, reflecting the inductive bias of the network architecture and training procedure. The third is physics-informed regularization, introduced through the augmented objective $\mathcal{R}_\alpha$. Their relative contributions vary systematically across problems, prior families, and architectures.

Among the three, the dominant source is prior-induced regularization. Theorem~1 establishes that $G^\dagger_\theta$ converges to $E_\mu[\mathbf{m} \mid \mathbf{d}]$, so the distributional shape of $\mu$ strongly influences the structural character of the learned solution. The experiments confirm this theoretical connection across all three problems. The TV prior, which implements the discrete total variation functional $\sum_j |m_{j+1} - m_j|$ as the implicit regularizer, produces the sharpest reconstructions for the Wing problem, whose true solution $\mathbf{m}_{\rm true}$ is a discontinuous rectangular pulse. The Gaussian prior, which induces Tikhonov-type
regularization and yields smooth solutions, produces the most accurate reconstructions for the subsurface interface problem, whose true solution is a smooth unimodal anomaly. The Laplace prior, whose $\ell_1$-type regularization promotes sparse deviations from $\mathbf{m}_0$, produces the most accurate reconstructions for the seismic tomography problem, where the true anomalies are spatially compact perturbations against a uniform background, resolving both anomalies with the sharpest boundaries and highest contrast among the four families. In each
case, the prior whose distributional shape most closely matches the structure of $\mathbf{m}_{\rm true}$ yields the most physically accurate reconstruction. This prior-structure alignment emerges as the primary determinant of reconstruction quality across all three problems.

A second, independent source of regularization arises from the inductive bias of the network architecture and training procedure. The uniform prior isolates this contribution: since its components are drawn independently with no spatial structure, any regularization in the resulting reconstructions arises entirely from the network. This is consistent across all three problems, with the uniform prior recovering the pulse location in the Wing problem, the anomaly peak in the subsurface interface problem, and both anomaly locations in the seismic tomography problem. For the nonlinear problems, however, architectural regularization alone is insufficient to produce physically accurate reconstructions or suppress additional features not present in $\mathbf{m}_{\rm true}$ without appropriate prior constraints, indicating that prior-structure alignment plays a more consequential role as problem complexity increases.

Finally, physics-informed regularization, introduced through $\mathcal{R}_\alpha$ with $\alpha = 1$, operates alongside the first two sources. As established in Theorem~2, minimizing $\mathcal{R}_\alpha$ is equivalent to learning $E_{\mu_\alpha}[\mathbf{m} \mid \mathbf{d}]$ under the tilted measure $\mu_\alpha \propto e^{-\alpha\|G(\mathbf{m}) - \mathbf{d}\|^2} \mu$, which concentrates probability mass on models consistent with the forward operator $G$. The contribution of $\mathcal{R}_\alpha$ is determined by prior-structure alignment. When the prior is well matched to $\mathbf{m}_{\rm true}$, the physics residual acts as a refinement, suppressing oscillations and improving reconstruction accuracy, as observed for the TV prior in the Wing problem and the Gaussian and Laplace priors in the subsurface interface problem. When the prior is poorly matched, the physics residual provides no meaningful benefit, as observed for the uniform prior in the subsurface interface problem and across all prior families in the seismic tomography problem. The augmented objective $\mathcal{R}_\alpha$ is therefore most effective as a refinement of a well-matched prior, rather than as a mechanism for compensating for
prior-structure mismatch.

The covariance structure of the Gaussian prior also influences reconstruction 
quality. For example, spatial correlation (equation~\ref{eq:cov}) is necessary for 
MLP and CNN to recover the smooth Gaussian-shaped anomaly in the subsurface 
interface problem, while DeepONet, through its Fourier trunk, provides sufficient 
spectral inductive bias to reconstruct the anomaly without it. For the seismic tomography problem, the correlated covariance reinforces the smooth solutions induced by the Gaussian prior, 
limiting qualitative improvement over the Gauss-Newton baseline. In both cases, however, 
prior distributional shape remains the dominant determinant of reconstruction quality when the two sources are compared directly.

Together, these results establish a clear design hierarchy
for neural network-based inversion. The learned operator
$G^\dagger_\theta$ approximates the conditional expectation $E_\mu[\mathbf{m} \mid \mathbf{d}]$ rather than the posterior mode, and therefore minimizes mean-square error. This provides a principled explanation for the observed improvement over MAP estimates, which is most pronounced for the nonlinear
problems and non-Gaussian prior families where the posterior is not unimodal. Prior selection is the primary design consideration and should be guided by the structural character of $\mathbf{m}_{\rm true}$. A mismatched prior degrades reconstruction quality in ways that neither architectural nor physics-informed regularization can fully correct. The physics-informed objective $\mathcal{R}_\alpha$ is most effective as a refinement when the prior is well matched, enforcing data consistency at low additional cost, and should not substitute for careful prior selection.

A direct consequence of the results presented here concerns the role of the sampling distribution in learned inversion. In many scientific inverse problems, observational data is insufficient to train $G^\dagger_\theta$ directly, and synthetic training
pairs are generated by drawing parameters from a prescribed distribution $\mu$ and evaluating the forward model. The results of this study demonstrate that this choice of $\mu$ is not a neutral computational step but rather a regularization decision with direct consequences for reconstruction quality. Across all three problems, the distributional shape of $\mu$ determined the
structural character of $E_\mu[\mathbf{m} \mid \mathbf{d}]$ more substantially than either the network architecture or $\mathcal{R}_\alpha$. A mismatched $\mu$ produced reconstructions that neither a more expressive architecture nor an augmented physics residual could meaningfully correct. This establishes a concrete design principle: $\mu$ should be chosen to reflect the structural character of $\mathbf{m}_{\rm true}$, since in the infinite-data limit, as established in Theorem~1, the sampling distribution and the regularization functional are equivalent.

The present study opens several directions for future
investigation. The weighting parameter $\alpha = 1$ is fixed throughout, and its sensitivity across prior families, architectures, and problem settings has not been systematically investigated. The three test problems are relatively low-dimensional, with $N$ ranging from $49$ to $100$, and the extent to which the three-source regularization framework extends to higher-dimensional settings remains an open question. The study considers three architectures and four prior families, and
the prior-structure alignment principle may not extend uniformly to broader classes of architectures with stronger or more specialized inductive biases, such as physics-tailored neural operators. More broadly, the connection established here between the choice of $\mu$ and the regularization character of $G^\dagger_\theta$ provides a principled framework for prior design in neural network-based inversion, with direct implications for scientific applications where structural knowledge of $\mathbf{m}_{\rm true}$ is available.

\flushleft{\textbf{References}} \\

\bibliographystyle{unsrt}
\bibliography{ref.bib}
\end{document}